\documentclass{article}
\usepackage{float}
\usepackage{amsfonts}
\usepackage{pst-node,pst-circ}
\usepackage{amsthm}
\usepackage{amssymb,color}
\usepackage{mdframed}
\usepackage[leqno]{amsmath}

\theoremstyle{plain}
\newtheorem{theorem}{Theorem}[section]
\newtheorem{lemma}[theorem]{Lemma}
\newtheorem{proposition}[theorem]{Proposition}

\theoremstyle{definition}
\newtheorem{definition}[theorem]{Definition}
\newtheorem{remark}[theorem]{Remark}
\newtheorem{example}[theorem]{Example}
\usepackage[top=3cm,bottom=3cm,left=3.2cm,right=3.2cm,headsep=10pt]{geometry}

\usepackage[shortlabels]{enumitem}
\usepackage{hyperref}
\usepackage{nameref}
\usepackage{amsmath}

\makeatletter

\usepackage{todonotes}


\newcommand{\proj}{\operatorname{proj}}
\newcommand{\Proj}{\operatorname{Proj}}
\renewcommand{\H}{\mathcal{H}}  
\newcommand{\B}{\mathbb{B}}
\newcommand{\R}{\mathbb{R}}
\newcommand{\N}{\mathbb{N}}
\def\tto{\rightrightarrows}

\title{Inexact Catching-Up Algorithm for Moreau's Sweeping Processes}
\author{Juan Guillermo Garrido\footnote{Departamento de Ingeniería Matemática, Universidad de Chile, Santiago, Chile. E-mail: \href{mailto:jgarrido@dim.uchile.cl}{jgarrido@dim.uchile.cl}},  Maximiliano Lioi\footnote{Departamento de Ingeniería Matemática, Universidad de Chile, Santiago, Chile. E-mail: \href{mlioi@dim.uchile.cl}{mlioi@dim.uchile.cl}}  \,and Emilio Vilches\footnote{Instituto de Ciencias de la Ingeniería, Universidad de O’Higgins, Rancagua, Chile.
		E-mail: \href{mailto:emilio.vilches@uoh.cl}{emilio.vilches@uoh.cl}}}
\date{\today}

\begin{document}

\maketitle

\begin{abstract}
In this paper, we develop an inexact version of the catching-up algorithm for sweeping processes. We define a new notion of approximate projection, which is compatible with any numerical method for approximating exact projections, as this new notion is not restricted to remain strictly within the set. We provide several properties of the new approximate projections, which enable us to prove the convergence of the inexact catching-up algorithm in three general frameworks: prox-regular moving sets, subsmooth moving sets, and merely closed sets. Additionally, we apply our numerical results to address complementarity dynamical systems, particularly electrical circuits with ideal diodes. In this context, we implement the inexact catching-up algorithm using a primal-dual optimization method, which typically does not necessarily guarantee a feasible point. Our results are illustrated through an electrical circuit with ideal diodes. Our results recover classical existence results in the literature and provide new insights into the numerical simulation of sweeping processes.
\end{abstract}

\section{Introduction}

The sweeping process, originally introduced by J.-J. Moreau in a series of foundational papers \cite{MR637727, MR637728}, was motivated by various concrete applications, including quasi-static evolution in elastoplasticity, contact dynamics, and friction dynamics \cite{Moreau_2004, Moreau_2011}. Since then, it has garnered significant interest in the study of dynamical systems with time-dependent constraints, particularly in fields such as nonsmooth mechanics, crowd motion \cite{MR3467591,MR2404902}, and, more recently, the modeling of electrical circuits \cite{MR2732595}. A well-established existence theory for the sweeping process is now widely recognized (see, e.g., \cite{MR2159846, MR3286703}). Of particular  interest is the case of prox-regular moving sets, where  existence and uniqueness of solutions can be established using the so-called catching-up algorithm (see \cite{MR2159846}). Originally introduced by J.-J. Moreau in \cite{MR637728} for convex sets, the catching-up algorithm consist of an implicit discretization of the sweeping process, resulting in an iteration based on the projection onto the moving set.   

The numerical applicability of the catching-up algorithm relies on the ability to compute an exact projection formula for the moving sets. However, for most sets, obtaining an exact projection onto a closed set is not feasible, and only numerical approximations can be obtained. In this paper, we develop a theoretical framework for numerically approximating solutions to the sweeping process. Building on the work done in \cite{MR2731287}, we define a new notion of approximate projection that is compatible with any numerical method for approximating exact projections, as this new notion is not restricted to remain strictly within the set.   

In this work, we provide several properties of approximate projections and propose a general numerical method for the sweeping process based on these approximate projections. This algorithm can be considered as an inexact version of the catching-up algorithm, for which we prove the convergence in three general cases: (i) prox-regular moving sets (without compactness assumptions), (ii) ball-compact subsmooth moving sets, and (iii) general ball-compact fixed closed set. As a result, our findings cover a wide range of existence for the sweeping process. 

Additionally, we apply our numerical results to address complementarity dynamical systems, particularly electrical circuits with ideal diodes. In this context, we implement the inexact catching- up algorithm using a primal-dual optimization method, which typically does not guarantee a feasible point.

It is worth emphasizing that our results generalize the catching-up algorithm and provide significant insights into the numerical solution of sweeping processes.

The paper is organized as follows. Section \ref{Sec-2} introduces the mathematical tools required for the presentation and develops the theoretical properties of approximate projections. Section \ref{Sec-3} focuses on the proposed algorithm and its main properties. In Section \ref{Sec-4}, we prove the convergence of the algorithm when the moving set has uniformly prox-regular values, without assuming compactness. Section \ref{Sec-5} addresses the convergence of the proposed algorithm for ball-compact subsmooth moving sets. Section \ref{Sec-6} extends this analysis to the case of a fixed ball-compact set. Finally, Section \ref{Sec-7} explores numerical techniques for tackling complementarity dynamical systems. In particular, we reformulate a specific class of these systems as a perturbed sweeping process, enabling the use of approximate projections. The paper ends with some concluding remarks.

\section{Mathematical Preliminaries}\label{Sec-2}
From now on, $\H$ denotes a real Hilbert space, whose norm, denoted by $\Vert \cdot \Vert$, is induced by the inner product $\langle \cdot, \cdot \rangle$. The closed (resp. open) ball centered at $x$ with radius $r > 0$ is denoted by $\B[x,r]$ (resp. $\B(x,r)$), and the closed unit ball is denoted by $\B$. For a given set $S \subset \H$, the \textit{support} and the \textit{distance} function of $S$ at $x \in \H$ are defined, respectively, as
$$
\sigma(x,S) = \sup_{z \in S} \langle x,z \rangle \text{ and }d_{S}(x) := \inf_{z \in S} \Vert x-z \Vert .
$$
A set $S\subset \H$ is called \emph{ball compact} if the set $S\cap r\mathbb{B}$ is compact for all $r>0$.
Given $\rho \in ]0,+ \infty]$ and $\gamma \in ]0, 1[$, the $\rho-$enlargement and the $\gamma \rho-$enlargement of $S$ are defined, respectively, as
$$
U_{\rho}(S) = \{x \in \H: d_S(x) < \rho\} \text{ and } U_{\rho}^{\gamma}(S) = \{x \in \H: d_S(x) < \gamma \rho\} .
$$
The Hausdorff distance between two sets $A, B \subset \mathcal{H}$ is defined as
$$
\operatorname{Haus}(A,B):=\max\{\sup_{x\in A}d_B(x),\sup_{x\in B}d_A(x)\}.
$$
A vector $h \in \mathcal{H}$ belongs to the Clarke tangent cone $T(S;x)$ (see \cite{MR1058436}); when for every sequence $\left(x_n\right)$ in $S$ converging to $x$ and every sequence of positive numbers $
(t_n)$ converging to 0, there exists a sequence $\left(h_n\right)$ in $\H$ converging to $h$ such that $x_n+t_n h_n \in S$ for all $n \in \mathbb{N}$. This cone is closed and convex, and its negative polar $N(S ; x)$ is the Clarke normal cone to $S$ at $x \in S$, that is,
$$
N(S ; x):=\{v \in \mathcal{H}:\langle v, h\rangle \leq 0 \text { for all } h \in T(S ; x)\} .
$$
As usual, $N(S ; x)=\emptyset$ if $x \notin S$. Through that normal cone, the Clarke subdifferential of a function $f\colon \mathcal{H} \rightarrow \mathbb{R} \cup\{+\infty\}$ is defined as
$$
\partial f(x):=\{v \in \mathcal{H}:(v,-1) \in N \text { (epi } f,(x, f(x)))\} ,
$$
where epi $f:=\{(y, r) \in \mathcal{H} \times \mathbb{R}: f(y) \leq r\}$ is the epigraph of $f$. When the function $f$ is finite and locally Lipschitzian around $x$, the Clarke subdifferential is characterized (see, e.g.,  \cite{MR1488695}) in the following simple and amenable way
$$
\partial f(x)=\left\{v \in \mathcal{H}:\langle v, h\rangle \leq f^{\circ}(x ; h) \text { for all } h \in \H\right\} ,
$$
where
$$
f^{\circ}(x ; h):=\limsup _{(t, y) \rightarrow\left(0^{+}, x\right)} t^{-1}[f(y+th)-f(y)],
$$
is the \emph{generalized directional derivative} of the locally Lipschitzian function $f$ at $x$ in the direction $h \in \mathcal{H}$. The function $f^{\circ}(x ; \cdot)$ is in fact the support of $\partial f(x)$, i.e., $f^{\circ}(x ; h)=\sigma_{\partial f(x)}(h)$. That characterization easily yields that the Clarke subdifferential of any locally Lipschitzian function is a set-valued map with nonempty and convex values satisfying the important property of upper semicontinuity from $\H$ into $\H_w$.
Let $f\colon \H \rightarrow \R \cup\{+\infty\}$ be an lsc (lower semicontinuous) function and $x \in \operatorname{dom} f$. We say that
\begin{enumerate}[(i)]
\item An element $\zeta \in \mathcal{H}$ belongs to the \emph{proximal subdifferential} of $f$ at $x$, denoted by $\partial_P f(x)$, if there exist two non-negative numbers $\sigma$ and $\eta$ such that
$$
f(y) \geq f(x)+\langle\zeta, y-x\rangle-\sigma\|y-x\|^2 \text { for all } y \in \mathbb{B}(x ; \eta) .
$$
\item An element $\zeta \in \mathcal{H}$ belongs to the \emph{Fr\'echet subdifferential} of $f$ at $x$, denoted by $\partial_F f(x)$, if
$$
\liminf _{h \rightarrow 0} \frac{f(x+h)-f(x)-\langle\zeta, h\rangle}{\|h\|} \geq 0 .
$$
\item An element $\zeta \in \mathcal{H}$ belongs to the \emph{limiting subdifferential} of $f$ at $x$, denoted by $\partial_L f(x)$, if there exist sequences $\left(\zeta_n\right)$ and $\left(x_n\right)$ such that $\zeta_n \in \partial_P f\left(x_n\right)$ for all $n \in \mathbb{N}$ and $x_n \rightarrow x, \zeta_n \rightharpoonup \zeta$, and $f\left(x_n\right) \rightarrow f(x)$.
\end{enumerate}
Through these concepts, we can define the proximal, Fr\'echet, and limiting normal cone of a given set $S \subset \mathcal{H}$ at $x \in S$, respectively, as
$$
N^P(S ; x):=\partial_P I_S(x), N^F(C ; x):=\partial_F I_C(x) \text { and } N^L(S ; x):=\partial_L I_S(x),
$$
where $I_S(x)=0$ if $x \in S$ and $I_S(x)=+\infty$ if $x \notin S$. It is well known that the normal cone and the distance function are related by the following formulas (see, e.g., \cite[Theorem 4.1]{MR1870754} and \cite{MR1488695}):
$$N^P(S ; x) \cap \mathbb{B}=\partial_P d_S(x) \textrm{ and } N(S ; x)=\overline{\operatorname{co}}^* N^L(S ; x)=\operatorname{cl}^*\left(\mathbb{R}_{+} \partial d_S(x)\right) \textrm{ for all } x\in S.
$$
\noindent In this paper, we consider the prominent class of prox-regular sets. Introduced by Federer in \cite{MR110078} and later developed by Rockafellar, Poliquin, and Thibault in \cite{MR1694378}.  The prox-regularity generalizes and unifies convexity and nonconvex bodies with $C^2$ boundary. We refer to \cite{MR2768810,MR4659163} for a survey.
\begin{definition}
Let $S$ be a closed subset of $\H$ and $\rho \in ]0, +\infty]$. The set $S$ is called $\rho-$uniformly prox-regular if for all $x \in S$ and $\zeta \in N^{P}(S;x)$ one has
$$\langle \zeta, x' - x \rangle \leq \frac{\Vert \zeta \Vert}{2 \rho} \Vert x' - x \Vert^{2} \text{ for all } x' \in S .$$
\end{definition}
\noindent It is clear that convex sets are $\rho$-uniformly prox-regular for any $\rho>0$. The following proposition provides a characterization of uniformly prox-regular sets (see, e.g., \cite{MR2768810}). 
\begin{proposition}\label{prop-prox-regular} 
Let $S \subset \H$ closed and $\rho \in ]0, +\infty]$. The following assertions are equivalent:
\begin{enumerate}[(a)]
    \item S is $\rho$-uniformly prox-regular.
    \item For any $\gamma\in ]0,1[$ the projection is well-defined on $U_{\rho}^{\gamma}(S)$ and for all $u_1, u_2 \in U_{\rho}^{\gamma}(S)$, one has
    $$\Vert \proj_{S}(u_1) - \proj_{S}(u_2) \Vert \leq (1 - \gamma)^{-1} \Vert u_1 - u_2 \Vert.$$
    \item For all $x_i \in S, v_i \in N^{P}(S;x_i)\cap \mathbb{B}$, with $i = 1, 2$, one has
    $$\langle v_1 - v_2, x_1 - x_2 \rangle \geq - \frac{1}{ \rho} \Vert x_1 - x_2 \Vert^{2},$$
    that is, the set-valued mapping $N^{P}(S;\cdot) \cap \B$ is $1/\rho-$hypomonotone.
    \item For all $\gamma \in ]0,1[$, for all $x', x \in U_{\rho}^{\gamma}(S)$ and for all $v \in \partial_{P} d_{S}(x)$, one has
    $$\langle v, x^{\prime} - x \rangle \leq \frac{1}{2 \rho (1- \gamma)^2}\Vert x^{\prime}-x\Vert^2 + d_{S}(x') - d_{S}(x).$$
\end{enumerate}
\end{proposition}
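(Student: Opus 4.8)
The plan is to prove the four statements equivalent by a cycle of implications, leaning on two facts already recorded above: the identity $N^P(S;x)\cap\B=\partial_P d_S(x)$ for $x\in S$, and the elementary observation that a unit proximal normal $\zeta\in N^P(S;x)$ is \emph{realized} by the metric projection, i.e. $\proj_S(x+t\zeta)=\{x\}$ for all sufficiently small $t>0$. Concretely, I would establish the core cycle $(a)\Rightarrow(b)\Rightarrow(d)\Rightarrow(a)$ and then splice in $(c)$ through the easy implication $(a)\Rightarrow(c)$ together with $(c)\Rightarrow(b)$; this yields $(a)\Leftrightarrow(b)\Leftrightarrow(c)\Leftrightarrow(d)$.

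For $(a)\Rightarrow(b)$ I would first argue that the projection is well defined on $U_\rho^\gamma(S)$: nonemptiness is classical for prox-regular sets (the prox-regularity estimate forces every minimizing sequence for $d_S(u)$ to be Cauchy, hence convergent by completeness of $\H$), while single-valuedness and the Lipschitz bound follow from one computation. Writing $p_i=\proj_S(u_i)$, one has $u_i-p_i\in N^P(S;p_i)$ with $\|u_i-p_i\|=d_S(u_i)<\gamma\rho$; applying the defining inequality of $(a)$ at $p_1$ (tested against $p_2$) and at $p_2$ (tested against $p_1$) and adding gives $\langle u_1-u_2,p_1-p_2\rangle\ge\big(1-\tfrac{d_S(u_1)+d_S(u_2)}{2\rho}\big)\|p_1-p_2\|^2>(1-\gamma)\|p_1-p_2\|^2$, and Cauchy--Schwarz yields the constant $(1-\gamma)^{-1}$; taking $u_1=u_2$ reproves single-valuedness. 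For the sharp converse $(d)\Rightarrow(a)$ I would restrict $(d)$ to $x,x'\in S$, where $d_S(x)=d_S(x')=0$ and $\partial_P d_S(x)=N^P(S;x)\cap\B$, obtaining $\langle v,x'-x\rangle\le\frac{1}{2\rho(1-\gamma)^2}\|x'-x\|^2$; letting $\gamma\downarrow0$ and rescaling $v=\zeta/\|\zeta\|$ for nonzero $\zeta\in N^P(S;x)$ recovers exactly the constant $\frac{1}{2\rho}$ in $(a)$. The easy implication $(a)\Rightarrow(c)$ is obtained by the same ``add the two tested inequalities'' trick, now with $x_i\in S$ and $v_i\in N^P(S;x_i)\cap\B$, the two factors $\frac{\|v_i\|}{2\rho}\le\frac{1}{2\rho}$ combining into the hypomonotonicity constant $\frac1\rho$.

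The implication $(b)\Rightarrow(d)$ is the semiconvexity step: I would show that the single-valued, $(1-\gamma)^{-1}$-Lipschitz projection makes $d_S$ lower-$C^2$ on the tube with the stated modulus. Using $\partial_P d_S(x)=\{(x-\proj_S(x))/d_S(x)\}$ off $S$ (and $=N^P(S;x)\cap\B$ on $S$), the inequality in $(d)$ is the first-order subdifferential form of this semiconvexity, the factor $(1-\gamma)^{-2}$ appearing as the square of the projection's Lipschitz constant. This is computational but routine once $(b)$ is in hand.

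The delicate point, and the one I expect to be the main obstacle, is closing the loop through $(c)$, i.e. $(c)\Rightarrow(b)$ with the \emph{sharp} constant. The naive route --- feeding $v_2=0$ into the hypomonotonicity of $(c)$ --- only returns the defining inequality of $(a)$ with constant $\frac1\rho$ (a factor $2$ off), and the heterogeneous normalizations $u_i-p_i=d_S(u_i)\,w_i$ obstruct a direct Lipschitz estimate. The correct mechanism is the resolvent/Minty theory for hypomonotone operators: $\frac1\rho$-hypomonotonicity of $N^P(S;\cdot)\cap\B$ makes $\mathrm{Id}+\sigma\,(N^P(S;\cdot)\cap\B)$ behave monotonically for $\sigma<\rho$, so its resolvent is single-valued and $\frac{1}{1-\sigma/\rho}$-Lipschitz; identifying this resolvent with the metric projection and taking $\sigma=\gamma\rho$ produces precisely the $(1-\gamma)^{-1}$-Lipschitz projection of $(b)$. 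Making the resolvent/projection identification rigorous on the tube, and bookkeeping the sharp constant throughout, is the technical heart of the argument.
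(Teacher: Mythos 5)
The paper does not prove this proposition at all: it is quoted from the literature with the pointer to Colombo and Thibault's survey \cite{MR2768810} (the result going back to Poliquin, Rockafellar and Thibault \cite{MR1694378} and to Bounkhel--Thibault \cite{MR2159846}), so there is no in-paper argument to compare yours against. Your outline is essentially the standard proof from those references. The implications $(a)\Rightarrow(b)$ (add the two prox-regularity inequalities at the feet of the projections, then Cauchy--Schwarz), $(a)\Rightarrow(c)$ (the same addition trick with $x_i\in S$ and $\|v_i\|\le 1$), and $(d)\Rightarrow(a)$ (restrict to $x,x'\in S$, let $\gamma\downarrow 0$, rescale a nonzero proximal normal) are correctly and completely sketched, and your diagnosis that the sharp constant in $(c)\Rightarrow(b)$ cannot be obtained by feeding $v_2=0$ into hypomonotonicity, but must come from a Minty-type resolvent argument, is exactly right.

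Three points remain genuinely open in your plan. First, nonemptiness of the projection on $U_{\rho}^{\gamma}(S)$ in an infinite-dimensional $\H$ is not the one-liner you suggest: for a minimizing sequence $(y_n)$ you cannot invoke the prox-regularity inequality at $y_n$, because $u-y_n$ is not known to be a proximal normal at $y_n$; the standard fix is Ekeland's variational principle, which produces points at which $u-y_{\epsilon}$ is an approximate proximal normal, and the same existence issue reappears in your $(c)\Rightarrow(b)$ step as the surjectivity of $\mathrm{Id}+\sigma\left(N^{P}(S;\cdot)\cap\B\right)$ onto the tube. Second, your $(b)\Rightarrow(d)$ computation (writing $\langle v,x'-x\rangle=\langle v,x'-p\rangle-d_S(x)$ and estimating via $p'=\proj_S(x')$) uses the prox-regularity inequality at the foot $p$, i.e. it uses $(a)$ and not only $(b)$; to keep the cycle honest you must first establish $(b)\Rightarrow(a)$, which follows from the classical connectedness argument showing that $\{t\in[0,\gamma\rho):\proj_S(x+tv)=x\}$ is open and closed, hence that $\proj_S(x+tv)=x$ for every $t<\gamma\rho$ and every unit $v\in N^{P}(S;x)$, whence $\langle v,x'-x\rangle\le\frac{1}{2t}\Vert x'-x\Vert^{2}$ for all $x'\in S$ and $t<\gamma\rho$. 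Third, $(c)\Rightarrow(b)$ is only announced: the identification of the resolvent with the metric projection and the constant bookkeeping are precisely where the published proofs spend their effort. None of these is a wrong turn, but as written the proposal is an accurate road map rather than a proof; since the paper itself delegates the statement to \cite{MR2768810}, the appropriate resolution here is the same citation.
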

Another prominent class of sets is that of subsmooth sets, which encompasses the concepts of convex and uniformly prox-regular sets (see \cite{MR2115366} and \cite[Chapter~8]{MR4659163} for a survey).
\begin{definition}
Let $S$ be a closed subset of $\mathcal{H}$. We say that $S$ is \textit{subsmooth} at $x_0 \in S$, if for every $\varepsilon>0$ there exists $\delta>0$ such that
\begin{equation}\label{subsmooth}
\left\langle\xi_2-\xi_1, x_2-x_1\right\rangle \geq-\varepsilon\left\|x_2-x_1\right\|,   
\end{equation}
whenever $x_1, x_2 \in \mathbb{B}\left[x_0, \delta\right] \cap S$ and $\xi_i \in N\left(S ; x_i\right) \cap \mathbb{B}$ for $i \in\{1,2\}$. The set $S$ is said \emph{subsmooth} if it is subsmooth at each point of $S$. We further say that $S$ is \textit{uniformly subsmooth}, if for every $\varepsilon>0$ there exists $\delta>0$, such that \eqref{subsmooth} holds for all $x_1, x_2 \in S$ satisfying $\left\|x_1-x_2\right\| \leq \delta$ and all $\xi_i \in N\left(S ; x_i\right) \cap \mathbb{B}$ for $i \in\{1,2\}$. Let $(S(t))_{t\in I}$ be a family of closed sets of $\H$ indexed by a nonempty set $I$. The family is called \textit{equi-uniformly subsmooth}, if for all $\varepsilon>0$, there exists $\delta>0$ such that for all $t\in I$, the inequality \eqref{subsmooth} holds for all $x_1,x_2\in S(t)$ satisfying $\|x_1-x_2\|\leq \delta$ and all $\xi_i\in N(S(t);x_i)\cap \mathbb{B}$ with $i\in\{1,2\}$. 
\end{definition}
Given an interval $\mathcal{I}$, a set-valued map $F\colon \mathcal{I}\tto \H$ is called measurable if, for every open set $U$ of $\H$, the inverse image $F^{-1}(U) = \{t\in\mathcal{I}:F(t)\cap U\neq\emptyset\}$ is a Lebesgue measurable set. Whenever $\H$ is separable and $F$ takes nonempty and closed values, this definition is equivalent to the $\mathcal{L}\otimes \mathcal{B}(\H)$-measurability of the graph  $\operatorname{gph} F := \{(t,x)\in \mathcal{I}\times \H: x\in F(t)\}$  (see, e.g., \cite[Theorem 6.2.20]{MR2527754}). \\
\noindent A set-valued map $F\colon \H\tto \H$ is called upper semicontinuous from $\H$ into $\H_w$ if,  for every weakly closed set $C\subset \H$, the inverse image $F^{-1}(C)$ is a closed set of $\H$. If $F$ is upper semicontinuous, it is well-known (see \cite[Proposition 6.1.15 (c)]{MR2527754}) that the map $x\mapsto \sigma_{F(x)}(\xi)$ is upper semicontinuous for all $\xi\in \H$. Moreover, when $F$ takes convex and weakly compact values, these two properties are equivalent (see \cite[Proposition 6.1.17]{MR2527754}).\\
\noindent The \textit{projection} onto $S \subset \H$ at $x \in \H$ is the (possibly empty) set defined as
$$\operatorname{Proj}_{S}(x) := \{ z \in S: d_{S}(x) = \Vert x-z\Vert \}.$$
Whenever the projection set is a singleton, we denote it simply as $\operatorname{proj}_S(x)$. In most applications, the projection is difficult to calculate numerically, and one must resort to approximations of this object. The first of them, defined for $\varepsilon > 0$ and studied in \cite{MR2731287}, is the so-called set of $\varepsilon$-\emph{approximate projections}, given by
$$\proj_{S}^{\varepsilon}(x) := \{z \in S: \Vert x-z\Vert^{2} < d_{S}^2(x) + \varepsilon\},$$
which is always nonempty and open. It is clear that an element of the above set can be obtained through an appropriate optimization numerical method. However, the above definition requires that every approximate projection lies entirely within the set $S$. Hence, only optimization algorithms that ensure this condition can be used to obtain an approximate projection. \\
\noindent It is known that for any $x \in \H$ where $\operatorname{Proj}_{S}(x) \neq \emptyset$, the following formula holds:
\begin{equation}\label{formula-1}
x - z \in d_S(x) \partial_P d_S(z) \quad \textrm{ for all } z \in \operatorname{Proj}_S(x).
\end{equation}
The next result, proved in \cite[Lemma~1]{MR2731287}, provides an approximate version of the above formula without the nonemptiness assumption on the projection.
\begin{lemma}\label{aprox-proj-I}
Let $S \subset \H$ be a nonempty and closed set, $x \in \H$ and $\varepsilon > 0$. For each $z \in \proj_{S}^{\varepsilon}(x)$ there is $v \in \proj_{S}^{\varepsilon}(x)$ such that $\Vert z-v \Vert < 2 \sqrt{\varepsilon}$ and
$$x-z \in (4 \sqrt{\varepsilon} + d_S(x)) \partial_{P} d_S(v) + 3 \sqrt{\varepsilon} \B.$$
\end{lemma}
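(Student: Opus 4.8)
The plan is to realize $z$ as an approximate minimizer of a smooth-plus-indicator function and then invoke a smooth variational principle to manufacture a genuine proximal subgradient at a nearby point. Concretely, set $f := \|x-\cdot\|^2 + I_S$, which is lsc and bounded below with $\inf_{\H} f = d_S^2(x)$. Since $z \in \proj_S^{\varepsilon}(x)$ we have $f(z) = \|x-z\|^2 < d_S^2(x) + \varepsilon = \inf_{\H} f + \varepsilon$, so $z$ is an $\varepsilon$-minimizer of $f$. I would then apply the Borwein--Preiss smooth variational principle with parameter $\lambda = \sqrt{\varepsilon}$: it produces a point $v \in \H$ together with a quadratic perturbation $y \mapsto \tfrac{\varepsilon}{\lambda^2}\|y - \bar v\|^2$ (whose center $\bar v$ is a convex combination of points lying within $\lambda$ of $z$) such that $\|v - z\| \le \sqrt{\varepsilon}$, $\|\bar v - v\| \le 2\sqrt{\varepsilon}$, $f(v) \le f(z)$, and $v$ minimizes $f(\cdot) + \tfrac{\varepsilon}{\lambda^2}\|\cdot - \bar v\|^2$ over $\H$. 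The inequality $f(v)\le f(z)$ already forces $\|x-v\|^2 \le \|x-z\|^2 < d_S^2(x)+\varepsilon$, i.e. $v \in \proj_S^{\varepsilon}(x)$, and $\|v-z\| \le \sqrt{\varepsilon} < 2\sqrt{\varepsilon}$ gives the required proximity.

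Next I would extract the subdifferential information. Because $v$ is a global minimizer of $f$ plus the smooth quadratic, the exact sum rule for the proximal subdifferential under a smooth perturbation yields $0 \in \partial_P f(v) + \tfrac{2\varepsilon}{\lambda^2}(v - \bar v)$; moreover, as $\|x-\cdot\|^2$ is $C^\infty$ with gradient $2(v-x)$ at $v$, one has $\partial_P f(v) = 2(v - x) + N^P(S;v)$. With $\lambda = \sqrt{\varepsilon}$ (so $\tfrac{2\varepsilon}{\lambda^2} = 2$) these combine into $2(x - v) + 2(\bar v - v) \in N^P(S;v)$, and since $N^P(S;v)$ is a cone this simplifies to $\zeta := (x - v) + (\bar v - v) \in N^P(S;v)$.

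I would then convert this proximal normal into a proximal subgradient of $d_S$ through the identity $N^P(S;v)\cap \B = \partial_P d_S(v)$ recalled above, after first controlling $\|\zeta\|$. From $\|x-z\|^2 < d_S^2(x)+\varepsilon$ one gets $\|x-z\| < d_S(x)+\sqrt{\varepsilon}$, hence $\|x-v\| \le \|x-z\| + \|z-v\| < d_S(x)+2\sqrt{\varepsilon}$, and therefore $\|\zeta\| \le \|x-v\| + \|\bar v - v\| < d_S(x) + 4\sqrt{\varepsilon} =: R$. Setting $w := \zeta/R$, the cone property gives $w \in N^P(S;v)$ while $\|w\| < 1$, so $w \in \partial_P d_S(v)$ and thus $\zeta \in R\,\partial_P d_S(v)$. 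Finally, decomposing $x - z = \zeta + [(v - z) - (\bar v - v)]$ and bounding the bracket by $\|v-z\| + \|\bar v - v\| \le 3\sqrt{\varepsilon}$ delivers exactly $x - z \in (d_S(x) + 4\sqrt{\varepsilon})\,\partial_P d_S(v) + 3\sqrt{\varepsilon}\,\B$.

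The main obstacle is conceptual rather than computational: since $S$ is merely closed in a possibly infinite-dimensional space, $\Proj_S(x)$ may be empty, so the exact relation \eqref{formula-1} is unavailable and one cannot differentiate at a true projection. The smooth variational principle is precisely what repairs this, furnishing a nearby point at which a quadratic lower support---hence a proximal subgradient---genuinely exists, without any attainment assumption. The one delicate bookkeeping point is the passage from the proximal normal $\zeta$ to an element of $\partial_P d_S(v)$ via the ball-intersection identity, which, together with the calibrated choice $\lambda = \sqrt{\varepsilon}$, is exactly what makes the constants $4\sqrt{\varepsilon}$, $3\sqrt{\varepsilon}$ and $2\sqrt{\varepsilon}$ emerge; the degenerate case $\zeta = 0$ (where $0 \in \partial_P d_S(v)$ still renders the inclusion valid) is a minor separate check.
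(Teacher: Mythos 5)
Your argument is correct, and it is essentially the standard proof of this lemma: the paper itself gives no proof, quoting the result from \cite[Lemma~1]{MR2731287}, where it is established by exactly your route --- apply the Borwein--Preiss smooth variational principle with $\lambda=\sqrt{\varepsilon}$ to $\Vert x-\cdot\Vert^2+I_S$, read off $(x-v)+(\bar v-v)\in N^{P}(S;v)$ from the exact sum rule at the perturbed minimizer, normalize into $\partial_P d_S(v)$ via $N^{P}(S;v)\cap\B=\partial_P d_S(v)$, and absorb the displacement $(v-z)-(\bar v-v)$ into the $3\sqrt{\varepsilon}\,\B$ term. The only point worth flagging is that the individual bounds $\Vert v-z\Vert\le\sqrt{\varepsilon}$ and $\Vert\bar v-v\Vert\le 2\sqrt{\varepsilon}$ depend on which formulation of Borwein--Preiss you invoke (the version in Clarke--Ledyaev--Stern--Wolenski gives $\Vert z-\bar v\Vert<\lambda$ and $\Vert v-\bar v\Vert<\lambda$, hence $\Vert v-z\Vert<2\lambda$), but either bookkeeping yields the same final constants $2\sqrt{\varepsilon}$, $3\sqrt{\varepsilon}$, $4\sqrt{\varepsilon}$, so the proof closes in both cases.
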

\noindent Now, we define the main object of this paper. 
\begin{definition}
Given a set $S\subset \H$, $x\in \H$, and $\varepsilon, \eta>0$, the set of $\varepsilon-\eta$ \textit{approximate projections} is defined as
$$\proj_S^{\varepsilon, \eta}(x):= \{  z \in S_{\eta}: \Vert x-z\Vert ^{2} < d_S^2(x) + \varepsilon \},$$
where $S_{\eta} \subset \H$ is any closed set such that $S \subset S_{\eta} \subset S + \eta \B$.
\end{definition}
\begin{lemma}  Let $\eta>0$ and $S_{\eta} \subset \H$ be any closed set such that $S \subset S_{\eta} \subset S + \eta \B$. Then, 
$$d_{S_{\eta}}(x) \leq d_{S}(x) \leq d_{S_{\eta}}(x) + \eta \quad \textrm{ for all } x\in \H.
$$
\end{lemma}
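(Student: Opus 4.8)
The plan is to prove the double inequality by treating each of the two bounds separately, since each one corresponds to exactly one of the two inclusions $S \subset S_{\eta}$ and $S_{\eta} \subset S + \eta \B$ that define $S_{\eta}$. Throughout I will rely only on two elementary facts about the distance function: first, that it is antitone with respect to set inclusion (enlarging the set cannot increase the infimum of distances), and second, that it is $1$-Lipschitz compatible with the triangle inequality. I will fix an arbitrary $x \in \H$ and keep it fixed for both estimates.

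For the left inequality $d_{S_{\eta}}(x) \leq d_{S}(x)$, I would observe that the inclusion $S \subset S_{\eta}$ means the infimum defining $d_{S_{\eta}}(x) = \inf_{z \in S_{\eta}} \Vert x - z \Vert$ is taken over a larger index set than the one defining $d_{S}(x) = \inf_{z \in S} \Vert x - z \Vert$. Since the infimum of a function over a superset is no larger than over the subset, the bound follows immediately. For the right inequality $d_{S}(x) \leq d_{S_{\eta}}(x) + \eta$, I would use the inclusion $S_{\eta} \subset S + \eta \B$: given any $z \in S_{\eta}$, there exist $s \in S$ and $b \in \B$ with $z = s + \eta b$, so that $\Vert z - s \Vert \leq \eta$. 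The triangle inequality then yields $d_{S}(x) \leq \Vert x - s \Vert \leq \Vert x - z \Vert + \Vert z - s \Vert \leq \Vert x - z \Vert + \eta$. Passing to the infimum over all $z \in S_{\eta}$ on the right-hand side gives $d_{S}(x) \leq d_{S_{\eta}}(x) + \eta$, as desired.

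I do not expect any genuine obstacle here: the statement is a direct structural consequence of the sandwiching of $S_{\eta}$ between $S$ and its $\eta$-enlargement, and both steps are one-line estimates once the correct inclusion is invoked. The only points worth a moment of care are that $S$ (hence $S_{\eta}$) should be nonempty so that the distances are finite and the infima are genuinely attained in the limit, and that in the representation $z = s + \eta b$ one uses the closed unit ball $\B$, which guarantees $\Vert z - s \Vert \leq \eta$ rather than a strict inequality; this is exactly what is needed to keep the additive constant equal to $\eta$ after taking the infimum.
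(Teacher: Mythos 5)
Your proof is correct, and it takes a genuinely more elementary route than the paper's for the second inequality. For the bound $d_{S_\eta}(x)\leq d_S(x)$ both arguments are identical (antitonicity of the distance under set inclusion). For $d_S(x)\leq d_{S_\eta}(x)+\eta$, however, the paper argues by cases on whether $x$ lies in $S+\eta\B$ or not: in the first case it notes $d_S(x)\leq \eta$, and in the second it invokes an external result (\cite[Lemma~2.1]{MR3025303}) giving the exact formula $d_{S+\eta\B}(x)=d_S(x)-\eta$, which combined with $d_{S+\eta\B}(x)\leq d_{S_\eta}(x)$ yields the claim. You instead avoid both the case split and the citation: for each $z\in S_\eta$ you pick $s\in S$ with $\Vert z-s\Vert\leq\eta$ and apply the triangle inequality to get $d_S(x)\leq\Vert x-z\Vert+\eta$, then take the infimum over $z\in S_\eta$. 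Your argument is shorter, self-contained, and arguably preferable here since the exact value of $d_{S+\eta\B}$ is never needed --- only the one-sided estimate. The only caveat, which you already flag, is that $S$ must be nonempty for the distances to be finite; this is implicit throughout the paper. (Your phrase about the infima being ``genuinely attained in the limit'' is unnecessary --- no attainment is used anywhere --- but it does no harm.)
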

\begin{proof} Fix $x\in \H$ and let $\eta > 0$.  Since $S \subset S_{\eta} \subset S + \eta \B$, we obtain that 
$$d_{S + \eta \B}(x) \leq d_{S_{\eta}}(x) \leq d_{S}(x),$$
which proves the first inequality. To prove the second inequality, we observe that any $x \in S + \eta \B$ can be written as $x=s+\eta b$ for some $s\in S$ and $b\in \mathbb{B}$. Hence, $d_S(x)\leq \Vert x-s\Vert \leq \eta$. Moreover, if $x \notin S + \eta \B$, then, according to \cite[Lemma~2.1]{MR3025303},  $d_{S + \eta \B}(x) = d_S(x) - \eta$, which implies the result.  
\end{proof}
\begin{proposition}\label{approx-eta}
Let $\varepsilon, \eta>0$, and assume that $S\subset S_{\eta}\subset S+\eta \mathbb{B}$. Then,  
\begin{equation*}
\operatorname{proj}_S^{\varepsilon}(x)\subset \operatorname{proj}_S^{\varepsilon,\eta}(x)\subset \operatorname{proj}_S^{\varepsilon+2\eta (d_S(x)+\sqrt{\varepsilon})+\eta^2}(x)+\eta \mathbb{B} \quad \textrm{ for all } x\in \H. 
\end{equation*}
\end{proposition}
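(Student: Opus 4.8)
The plan is to establish the two inclusions separately, both being elementary consequences of the definitions together with the triangle inequality and the preceding lemma relating $d_S$ and $d_{S_\eta}$. The first inclusion $\proj_S^{\varepsilon}(x)\subset\proj_S^{\varepsilon,\eta}(x)$ is immediate from $S\subset S_\eta$: any $z\in\proj_S^{\varepsilon}(x)$ lies in $S$, hence in $S_\eta$, and it already satisfies the defining inequality $\Vert x-z\Vert^2<d_S^2(x)+\varepsilon$ used in the definition of $\proj_S^{\varepsilon,\eta}(x)$. No estimate is needed there, so all the substance lies in the second inclusion.

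For the second inclusion I would fix $z\in\proj_S^{\varepsilon,\eta}(x)$, so that $z\in S_\eta$ and $\Vert x-z\Vert^2<d_S^2(x)+\varepsilon$. Since $S_\eta\subset S+\eta\B$, I can write $z=w+\eta b$ with $w\in S$ and $b\in\B$, whence $\Vert z-w\Vert\le\eta$. The goal then reduces to showing that $w\in\proj_S^{\varepsilon'}(x)$ for $\varepsilon'=\varepsilon+2\eta(d_S(x)+\sqrt{\varepsilon})+\eta^2$; once this holds, the identity $z=w+\eta b$ exhibits $z$ as an element of $\proj_S^{\varepsilon'}(x)+\eta\B$, which is exactly the claimed inclusion.

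To verify the membership of $w$, I would expand $\Vert x-w\Vert^2$ through the triangle inequality $\Vert x-w\Vert\le\Vert x-z\Vert+\eta$, obtaining $\Vert x-w\Vert^2\le\Vert x-z\Vert^2+2\eta\Vert x-z\Vert+\eta^2$. The first term is bounded by $d_S^2(x)+\varepsilon$ by hypothesis. For the cross term, the key technical observation is the sublinear bound $\Vert x-z\Vert<\sqrt{d_S^2(x)+\varepsilon}\le d_S(x)+\sqrt{\varepsilon}$, where the last step uses the elementary inequality $\sqrt{a^2+b^2}\le a+b$ for $a,b\ge 0$ with $a=d_S(x)$ and $b=\sqrt{\varepsilon}$. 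Combining these gives $\Vert x-w\Vert^2<d_S^2(x)+\varepsilon+2\eta(d_S(x)+\sqrt{\varepsilon})+\eta^2=d_S^2(x)+\varepsilon'$, precisely the condition defining $w\in\proj_S^{\varepsilon'}(x)$.

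The main, and fairly mild, obstacle is pinning down the enlargement constant $\varepsilon'$ exactly: one must linearize $\Vert x-z\Vert$ via the square-root bound rather than squaring the hypothesis directly, since it is the cross term $2\eta\Vert x-z\Vert$ that produces the $2\eta(d_S(x)+\sqrt{\varepsilon})$ contribution. Some care with strict versus non-strict inequalities is also warranted so that the final estimate remains strict, consistent with the open definition of the approximate projection.
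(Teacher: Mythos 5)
Your proof is correct and follows essentially the same route as the paper: the first inclusion from $S\subset S_{\eta}$, and the second by decomposing $z=s+\eta b$ with $s\in S$ and estimating $\Vert x-s\Vert^{2}$ via the triangle inequality and the bound $\Vert x-z\Vert< d_S(x)+\sqrt{\varepsilon}$. You merely spell out the intermediate estimate that the paper states without detail.
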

\begin{proof}
Fix $x\in \H$. The first assertion follows directly from the definition of the $\varepsilon-\eta$ approximate projection. To prove the second inclusion, let $z \in \proj_{S}^{\varepsilon, \eta}(x)$. Then $$\Vert x - z \Vert^{2} < d_{S}^{2}(x) + \varepsilon.$$
Since $z \in S_{\eta} \subset S + \eta \B$, there exists $s \in S, b \in \B$ such that $z = s + \eta b$. We observe that
$$\Vert x - s \Vert^{2} < d_{S}^{2}(x) + \varepsilon+2\eta (d_S(x)+\sqrt{\varepsilon})+\eta^2.$$
Hence, $s \in \proj_{S}^{\varepsilon+2\eta (d_S(x)+\sqrt{\varepsilon})+\eta^2}(x)$, which implies that $z \in \proj_S^{\varepsilon+2\eta (d_S(x)+\sqrt{\varepsilon})+\eta^2}(x)+\eta \mathbb{B}$.
\end{proof}
\noindent The next result provides a generalized version of Lemma \ref{aprox-proj-I} for $\varepsilon-\eta$ approximate projections.
\begin{lemma}\label{aprox-proj-II}
Let $S \subset \H$ be a nonempty, closed set, $x \in \H$, and $\varepsilon, \eta > 0$. Then, for each $z \in \proj_S^{\varepsilon, \eta}(x)$, there exists $v \in \proj_{S}^{\varepsilon'}(x)$ such that $\Vert z - v \Vert < 2 \sqrt{\varepsilon'} + \eta$ and
$$x - z \in (4 \sqrt{\varepsilon'} + d_{S}(x)) \partial_{P}d_{S}(v) + (3 \sqrt{\varepsilon'} + \eta) \B,$$
where $\varepsilon':= \varepsilon + 2 \eta(d_S(x) + \sqrt{\varepsilon}) + \eta^{2}$.
\end{lemma}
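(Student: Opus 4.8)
The plan is to reduce the statement to the already-established Lemma \ref{aprox-proj-I} by first pushing an arbitrary $\varepsilon$-$\eta$ approximate projection back into an ordinary $\varepsilon'$-approximate projection, using the second inclusion of Proposition \ref{approx-eta}. Concretely, fix $z \in \proj_S^{\varepsilon,\eta}(x)$. The second inclusion in Proposition \ref{approx-eta} asserts that $z \in \proj_S^{\varepsilon'}(x) + \eta\B$ with exactly $\varepsilon' = \varepsilon + 2\eta(d_S(x) + \sqrt{\varepsilon}) + \eta^2$, so there exists $s \in \proj_S^{\varepsilon'}(x)$ with $\Vert z - s \Vert \leq \eta$. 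This is the single genuine ingredient beyond bookkeeping, and it is already available to us.

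Next I would apply Lemma \ref{aprox-proj-I} to the point $s \in \proj_S^{\varepsilon'}(x)$, crucially with tolerance $\varepsilon'$ in place of $\varepsilon$. This yields a point $v \in \proj_S^{\varepsilon'}(x)$ satisfying $\Vert s - v \Vert < 2\sqrt{\varepsilon'}$ together with
$$x - s \in (4\sqrt{\varepsilon'} + d_S(x))\,\partial_P d_S(v) + 3\sqrt{\varepsilon'}\,\B.$$
This $v$ will serve as the witness claimed in the statement.

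The two conclusions then follow from triangle-type estimates. For the distance bound, $\Vert z - v \Vert \leq \Vert z - s \Vert + \Vert s - v \Vert < \eta + 2\sqrt{\varepsilon'}$, which is precisely the required inequality. For the subdifferential inclusion, I would write $x - z = (x - s) + (s - z)$; since $s - z \in \eta\B$, adding this displacement to the inclusion for $x - s$ absorbs the extra $\eta\B$ into the existing error term, giving $x - z \in (4\sqrt{\varepsilon'} + d_S(x))\,\partial_P d_S(v) + (3\sqrt{\varepsilon'} + \eta)\B$, exactly as stated.

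I do not anticipate a serious obstacle here: the argument is a clean composition of the enlargement inclusion of Proposition \ref{approx-eta} with the approximate-projection formula of Lemma \ref{aprox-proj-I}. The only care required is the bookkeeping that the tolerance fed into Lemma \ref{aprox-proj-I} must be $\varepsilon'$ rather than $\varepsilon$, so that the coefficients $4\sqrt{\varepsilon'}$ and $3\sqrt{\varepsilon'}$ come out correctly, and that the definition of $\varepsilon'$ used here matches verbatim the exponent produced by Proposition \ref{approx-eta} — which it does by inspection.
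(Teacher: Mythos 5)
Your proposal is correct and follows essentially the same route as the paper: decompose $z = s + \eta b$ via the second inclusion of Proposition \ref{approx-eta}, apply Lemma \ref{aprox-proj-I} at tolerance $\varepsilon'$ to the point $s$, and finish with the triangle inequality and the splitting $x - z = (x-s) + (s-z)$. No gaps.
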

\begin{proof}
Fix $x \in \H$ and $\varepsilon, \eta > 0$, let $z \in \proj_{S}^{\varepsilon, \eta}(x)$.  Then,  by Proposition \ref{approx-eta}, there exist $s \in \proj_{S}^{\varepsilon'}(x)$ and $b \in \B$ such that $z = s + \eta b$ with $\varepsilon':= \varepsilon + 2 \eta(d_S(x) + \sqrt{\varepsilon}) + \eta^{2}$. Then, according to Lemma \ref{aprox-proj-I}, there is $v \in \proj_{S}^{\varepsilon'}(x)$ such that $\Vert s - v \Vert < 2 \sqrt{\varepsilon'}$ and 
$$ x- s \in (4 \sqrt{\varepsilon'} + d_{S}(x)) \partial_{P} d_{S}(v) + 3 \sqrt{\varepsilon'} \B.$$
Since $z - s \in \eta \B$, we observe that 
$$\Vert z-v \Vert \leq \Vert z-s \Vert + \Vert s-v \Vert < 2 \sqrt{\varepsilon'} + \eta.$$
Then, because $x - z = (x - s) + (s - z)$, we get that 
$$x-z \in (4 \sqrt{\varepsilon'} + d_{S}(x)) \partial_{P} d_{S}(v) + 3 \sqrt{\varepsilon'} \B + \eta \B,$$
which proves the desired result.
\end{proof}
The following result provides two important properties of $\varepsilon-\eta$ projections. Note that the second statement corresponds to a generalization of property (b) in Proposition \ref{prop-prox-regular}.
\begin{proposition}
    Let $S \subset \H$ be a $\rho$-uniformly prox-regular set. Then, one has:
    \begin{enumerate}[(a)]
    \item Let $x_n\to x \in U_{\rho}(S)$. Then for any $(z_n)$ and any pair of sequences of positive numbers $(\varepsilon_n)$ and $(\eta_n)$ converging to 0 with $z_n \in \proj_{S}^{\varepsilon_n, \eta_n}(x_n)$ for all $n \in \N$, we have that $z_n \to \proj_{S}(x)$.
    \item Let $\gamma \in ]0,1[$ and  $\eta \in ]0,\rho[$. Assume that $\eta\in ]0,\eta_0[$ and $\varepsilon \in ]0,\varepsilon_0]$, where $\eta_0$ and $\varepsilon_0$ are such that
    \begin{equation*}
    \gamma + 4\beta_0 \left(1 + \frac{1}{\rho} \right) + \frac{3 \eta_0}{2}
     + \left( \frac{4\beta_0}{\rho} + \gamma \right) \left(4\beta_0 + 2 \eta_0\right) = 1 , 
    \end{equation*}
    where $\beta_0 := \sqrt{\varepsilon_0}+\eta_0+\sqrt{2\eta_0\gamma \rho}$.
    Then, for all $z_i \in \proj_{S}^{\varepsilon, \eta}(x_i)$ and $x_i \in U_{\rho}^{\gamma}(S)$ for $i \in \{1,2\}$, we have
    $$(1-\digamma) \Vert z_1 -z_2 \Vert^{2} \leq \left( \sqrt{\varepsilon^{\prime}} + \frac{\eta}{2} \right) \Vert x_1 - x_2 \Vert^{2} + M \sqrt{\varepsilon^{\prime}} + N \eta + \langle x_1- x_2, z_1-z_2 \rangle,$$
    where $\digamma:= \frac{\alpha}{\rho}+4\sqrt{\varepsilon'}(1 + \frac{1}{\rho}) + \frac{3 \eta}{2} + \left(\frac{4\sqrt{\varepsilon'} + \alpha}{\rho}\right) \left(4 \sqrt{\varepsilon'} + 2 \eta\right)$, $\alpha:= \max \{d_{S} (x_1), d_{S}(x_2)\}$, $\varepsilon^{\prime}:=\varepsilon+2\eta(\alpha + \sqrt{\varepsilon}))+\eta^2$, $ M:= \left(\frac{4\sqrt{\varepsilon'} + \alpha}{\rho}\right)(16 \sqrt{\varepsilon'} + 16 \eta + 4)+24 \sqrt{\varepsilon^{\prime}} + 20 \eta + 11$ and $N := \left(\frac{4\sqrt{\varepsilon'} + \alpha}{\rho}\right)(4 \eta + 2) + 4\eta + 5$.
    \end{enumerate}
\end{proposition}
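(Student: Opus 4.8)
The plan is to treat the two assertions separately. In both cases the strategy is the same: reduce the $\varepsilon$-$\eta$ approximate projections to genuine $\varepsilon'$-approximate projections via Proposition \ref{approx-eta} and Lemma \ref{aprox-proj-II}, and then exploit the $\rho$-uniform prox-regularity of $S$ through Proposition \ref{prop-prox-regular}.

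For (a), I would first record a \emph{quadratic growth} estimate obtained directly from the definition of prox-regularity. If $x \in U_{\rho}(S)$ and $p=\proj_S(x)$, then $x-p\in N^P(S;p)$ with $\|x-p\|=d_S(x)$, so applying the defining inequality to $\zeta=x-p$ and expanding $\|x-y\|^2=\|x-p\|^2-2\langle x-p,y-p\rangle+\|y-p\|^2$ yields
$$
\|x-y\|^2 \ge d_S^2(x) + \left(1-\tfrac{d_S(x)}{\rho}\right)\|y-p\|^2 \quad \text{for all } y\in S.
$$
Since $d_S(x)<\rho$, fix $\gamma\in\,]d_S(x)/\rho,\,1[$; as $d_S$ is $1$-Lipschitz and $x_n\to x$, one has $x_n\in U_{\rho}^{\gamma}(S)$ for large $n$, so $\proj_S(x_n)$ is well-defined and, by Proposition \ref{prop-prox-regular}(b), converges to $\proj_S(x)$. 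By Proposition \ref{approx-eta} each $z_n\in\proj_S^{\varepsilon_n,\eta_n}(x_n)$ splits as $z_n=s_n+\eta_n b_n$ with $s_n\in\proj_S^{\varepsilon_n'}(x_n)$, $b_n\in\B$, and $\varepsilon_n':=\varepsilon_n+2\eta_n(d_S(x_n)+\sqrt{\varepsilon_n})+\eta_n^2\to 0$. Feeding $y=s_n$ into the growth estimate gives $(1-\gamma)\|s_n-\proj_S(x_n)\|^2<\varepsilon_n'\to 0$, hence $s_n\to\proj_S(x)$; since $\|z_n-s_n\|\le\eta_n\to 0$, we conclude $z_n\to\proj_S(x)$.

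For (b), the plan is a direct computation. Fix $z_i\in\proj_S^{\varepsilon,\eta}(x_i)$. By Lemma \ref{aprox-proj-II}, after replacing each $d_S(x_i)$ by $\alpha$ to obtain the uniform $\varepsilon'$ (this is legitimate because $\partial_P d_S(v)=N^P(S;v)\cap\B$ is convex and contains the origin, so enlarging the scalar coefficient only enlarges the set), there exist $v_i\in\proj_S^{\varepsilon'}(x_i)$ and representations
$$
x_i-z_i = a\,w_i + r_i,\qquad w_i\in N^P(S;v_i)\cap\B,\quad \|r_i\|\le 3\sqrt{\varepsilon'}+\eta,\quad \|z_i-v_i\|<2\sqrt{\varepsilon'}+\eta,
$$
with $a=4\sqrt{\varepsilon'}+\alpha$. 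Rewriting the target quantity as
$$
\|z_1-z_2\|^2-\langle x_1-x_2,z_1-z_2\rangle = -a\langle z_1-z_2,w_1-w_2\rangle-\langle z_1-z_2,r_1-r_2\rangle,
$$
I would replace $z_1-z_2$ by $v_1-v_2$ in the first inner product (the correction is controlled by $2\sqrt{\varepsilon'}+\eta$ together with $\|w_1-w_2\|\le 2$), apply the $1/\rho$-hypomonotonicity of $N^P(S;\cdot)\cap\B$ from Proposition \ref{prop-prox-regular}(c) to get $-\langle v_1-v_2,w_1-w_2\rangle\le\frac1\rho\|v_1-v_2\|^2$, and then expand $\|v_1-v_2\|^2$ around $\|z_1-z_2\|^2$, again paying errors of order $\sqrt{\varepsilon'}+\eta$.

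The remaining and main task is bookkeeping. Every cross term is estimated by Cauchy--Schwarz and $\|w_i\|\le 1$; the resulting terms linear in $\|z_1-z_2\|$ are handled by the triangle inequality $\|z_1-z_2\|\le\|x_1-x_2\|+\|x_1-z_1\|+\|x_2-z_2\|$ (with $\|x_i-z_i\|^2<d_S^2(x_i)+\varepsilon$) followed by a weighted Young step, which is precisely what produces the coefficient $\sqrt{\varepsilon'}+\tfrac{\eta}{2}=\tfrac12(2\sqrt{\varepsilon'}+\eta)$ in front of $\|x_1-x_2\|^2$; the leftover constants assemble into $M\sqrt{\varepsilon'}+N\eta$, and the coefficient of $\|z_1-z_2\|^2$ collects exactly into $\digamma=\frac a\rho+4\sqrt{\varepsilon'}+\frac{3\eta}{2}+\frac a\rho(4\sqrt{\varepsilon'}+2\eta)$. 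Finally, I would verify that the calibration hypothesis makes the estimate meaningful: writing $\varepsilon'=(\sqrt{\varepsilon}+\eta)^2+2\eta\alpha$ gives $\sqrt{\varepsilon'}\le\sqrt{\varepsilon}+\eta+\sqrt{2\eta\alpha}<\beta_0$, and together with $\alpha<\gamma\rho$ this bounds $\digamma$ by the left-hand side of the defining equation for $\varepsilon_0,\eta_0$, hence $\digamma<1$. The main obstacle is therefore not any single inequality but keeping the accumulation of error constants tight enough to match the stated $\digamma$, $M$, $N$ while respecting this threshold.
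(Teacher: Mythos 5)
Your overall strategy for part (b) coincides with the paper's own proof: both reduce to Lemma \ref{aprox-proj-II}, normalize the proximal inclusions by the common factor $4\sqrt{\varepsilon'}+\alpha$ (justified exactly as you say, since the truncated proximal normal cone absorbs a larger denominator), invoke the hypomonotonicity of $N^P(S;\cdot)\cap\B$ from Proposition \ref{prop-prox-regular}(c), swap $z_i$ for $v_i$ at a cost of order $\sqrt{\varepsilon'}+\eta$, and absorb the terms linear in $\Vert z_1-z_2\Vert$ into the quadratic one. Two small remarks: in the paper the coefficient $\sqrt{\varepsilon'}+\eta/2$ in front of $\Vert x_1-x_2\Vert^2$ arises from a weighted Young inequality applied to the cross terms $\langle x_1-x_2, v_i-z_i\rangle$ (inequality \eqref{ineqb-3}), not from a triangle inequality on $\Vert z_1-z_2\Vert$; and your closing verification that the calibration of $(\varepsilon_0,\eta_0)$ forces $\digamma<1$ (via $\sqrt{\varepsilon'}\le\sqrt{\varepsilon}+\eta+\sqrt{2\eta\alpha}<\beta_0$ and $\alpha<\gamma\rho$) is a welcome addition that the paper leaves implicit. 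You do leave the final bookkeeping for $M$ and $N$ unexecuted, but every estimate you list is the one the paper actually uses, so this is a matter of carrying out arithmetic rather than a missing idea.

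For part (a) you take a genuinely different, and arguably cleaner, route. The paper expands $\Vert z_n-\proj_S(x)\Vert^2$ directly, controls the term $\langle x-\proj_S(x), s_n-\proj_S(x)\rangle$ via \eqref{formula-1} and the prox-regularity inequality, and rearranges to isolate $\Vert z_n-\proj_S(x)\Vert^2$ with the factor $1-d_S(x)/\rho$. You instead derive the quadratic growth estimate $\Vert x-y\Vert^2\ge d_S^2(x)+\bigl(1-\tfrac{d_S(x)}{\rho}\bigr)\Vert y-\proj_S(x)\Vert^2$ for $y\in S$ straight from the definition of prox-regularity, apply it at $x_n$ to the point $s_n\in\proj_S^{\varepsilon_n'}(x_n)$ furnished by Proposition \ref{approx-eta}, and then use the Lipschitz continuity of $\proj_S$ on $U_\rho^{\gamma}(S)$ from Proposition \ref{prop-prox-regular}(b) to pass from $\proj_S(x_n)$ to $\proj_S(x)$. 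Both arguments are correct; yours trades the paper's one-shot expansion for two modular ingredients and avoids the rearrangement step, at the price of invoking the Lipschitz property of the exact projection, which the paper's version does not need.
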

\begin{proof}
\textit{(a)}: We observe that for all $n \in \N$
$$\Vert z_n \Vert \leq \Vert z_n - x_n \Vert + \Vert x_n \Vert \leq d_{S}(x_n) + \sqrt{\varepsilon_n} + \Vert x_n \Vert.$$
Hence, since $\varepsilon_n \to 0$ and $x_n \to x$, we obtain $(z_n)$ is bounded. On the other hand, since $x \in U_{\rho}(S)$, the projection $\proj_{S}(x)$ is well-defined and
\begin{equation*}
\begin{aligned}
\Vert z_n - \proj_{S}(x) \Vert^{2} &= \Vert z_n - x_n \Vert^{2} - \Vert x_n - \proj_{S}(x) \Vert^{2}\\
&+ 2 \langle x - \proj_S(x), z_n - \proj_{S}(x) \rangle + 2 \langle z_n - \proj_S(x), x_n - x \rangle\\
&\leq d_S^{2}(x_n) + \varepsilon_n - \Vert x_n - \proj_{S}(x) \Vert^{2}\\
& + 2 \langle x - \proj_S(x), z_n - \proj_{S}(x) \rangle + 2 \langle z_n - \proj_S(x), x_n - x \rangle\\
&\leq \varepsilon_n + 2 \langle x - \proj_S(x), z_n - \proj_{S}(x) \rangle + 2 \langle z_n - \proj_S(x), x_n - x \rangle\\
&\leq \varepsilon_n + 2 \langle x - \proj_S(x), z_n - \proj_{S}(x) \rangle + 2 \Vert z_n - \proj_S(x)\Vert \cdot \Vert x_n - x \Vert,
\end{aligned}
\end{equation*}
where we have used that $z_n \in \proj_{S}^{\varepsilon_n, \eta_n}(x_n)$ and the fact that $d_S^2(x_n) \leq \Vert x_n - \proj_S(x) \Vert^{2}$. On the other hand, since $z_n \in S_{\eta_n} \subset S + \eta_n \B$, we observe that there exists $s_n \in S$ and $b_n \in \B$ such that $z_n = s_n + \eta_n b_n$. Hence, 
\begin{equation*}
\begin{aligned}
2\langle x - \proj_S(x), z_n - \proj_{S}(x) \rangle &= 2\langle x - \proj_S(x), s_n - \proj_{S}(x) \rangle+ 2\langle x - \proj_S(x), \eta_n b_n \rangle.
\end{aligned}
\end{equation*}
Moreover, according to inclusion \eqref{formula-1} and the $\rho$-uniform prox-regularity of $S$, we obtain that
\begin{equation*}
    \begin{aligned}
    2\langle x - \proj_S(x), s_n - \proj_{S}(x) \rangle &\leq \frac{d_S(x)}{\rho} \Vert s_n - \proj_S(x) \Vert^{2}\\
     &=\frac{d_S(x)}{\rho} \Vert z_n-\eta_n b_n - \proj_S(x) \Vert^{2}\\
        &\leq \frac{d_S(x)}{\rho} \left( \Vert z_n-\proj_S(x)\Vert^2+2\eta_n \Vert z_n-\proj_S(x)\Vert+  \eta_n^2\right), 
    \end{aligned}
\end{equation*}
where we have used that $z_n = s_n + \eta_n b_n$. Therefore, 
\begin{equation*}
    \begin{aligned}
    \Vert z_n - \proj_{S}(x) \Vert^{2} &\leq \varepsilon_n +\frac{d_S(x)}{\rho} \left( \Vert z_n-\proj_S(x)\Vert^2+2\eta_n \Vert z_n-\proj_S(x)\Vert+\eta_n^2\right)\\&+2\eta_n d_S(x)+2\Vert  z_n - \proj_S(x)\Vert \cdot \Vert x_n - x \Vert.
    \end{aligned}
\end{equation*}
Rearranging terms, we obtain that
\begin{equation*}
\begin{aligned}
\Vert z_n - \proj_{S}(x) \Vert^{2} &\leq  \frac{\rho \varepsilon_n}{\rho - d_{S}(x)} + \frac{d_S(x)}{\rho - d_{S}(x)} \left( 2\eta_n \Vert z_n-\operatorname{proj}_S(x)\Vert +\eta_n^2 \right)\\
&+\frac{\rho}{\rho-d_S(x)}\left(2\eta_n d_S(x)+2\Vert z_n-\operatorname{proj}_S(x)\Vert \cdot \Vert x_n-x\Vert \right).
\end{aligned}
\end{equation*}
Finally, since $x_n \to x$, $(z_n)$ is bounded, $\varepsilon_n \to 0$ and $\eta_n \to 0$, we concluded that $z_n \to \proj_{S}(x)$.\\
\noindent \textit{(b)}: For $i=1,2$, let  $z_i \in \proj_{S}^{\varepsilon, \eta}(x_i)$. By virtue of Lemma \ref{aprox-proj-II}, there exist $v_i, b_i \in \H$ for $i \in  \{1,2\}$ such that
$$b_i \in \B, v_i \in \proj_S^{\varepsilon_{i}'}(x_i), \Vert z_i - v_i \Vert \leq 2\sqrt{\varepsilon_{i}'} + \eta \text{ and } \frac{x_i - z_i -(3 \sqrt{\varepsilon_{i}'} + \eta)b_i}{4\sqrt{\varepsilon_{i}'} + d_{S}(x_i)} \in \partial_{P} d_{S}(v_i),$$
where $\varepsilon_{i}^{\prime}:= \varepsilon + 2\eta(d_{S}(x_i) + \sqrt{\varepsilon}) + \eta^{2}$. Hence, for $i \in \{1,2\}$, one has
$$x_i - z_i - (3\sqrt{\varepsilon_{i}'} + \eta)b_i \in N^{P}(S;v_i) \cap \tau \B \text{ for all } \tau \geq 4 \sqrt{\varepsilon_{i}'} + d_{S}(x_i).$$
For $i \in \{1,2\}$, let us consider
$$\zeta_i := \frac{x_i - z_i - (3\sqrt{\varepsilon_{i}'} + \eta)b_i}{4\sqrt{\varepsilon'} + \alpha} \in N^{P}(S;v_i) \cap \B ,$$
where $\alpha:= \max \{d_{S}(x_1), d_{S}(x_2)\}$ and $\varepsilon' = \max \{\varepsilon_{1}', \varepsilon_{2}'\} = \varepsilon + 2 \eta(\alpha + \sqrt{\varepsilon}) + \eta^{2}$. Since $S$ is $\rho-$uniformly prox-regular and $v_i \in S$, using the hypomonotonicity of the truncated proximal normal cone (see Proposition \ref{prop-prox-regular} (c)), we obtain that
\begin{equation}\label{ineqb-1}
\langle \zeta_1 - \zeta_2, v_1 -v_2  \rangle \geq -\frac{1}{\rho}\Vert v_1 - v_2 \Vert^{2} .
\end{equation}
On the one hand, since $\Vert z_i - v_i \Vert \leq 2 \sqrt{\varepsilon_{i}'} + \eta$ for $i \in \{1,2\}$, we get that
\begin{equation}\label{ineqb-2}
\Vert v_1 - v_2 \Vert \leq \Vert v_1 - z_1 \Vert + \Vert z_1 - z_2 \Vert + \Vert z_2 - v_2 \Vert \leq 4 \sqrt{\varepsilon'} + 2\eta + \Vert z_1 - z_2 \Vert ,
\end{equation}
and for all $z \in \H$ and $i \in \{1,2\}$, one has
\begin{equation}\label{ineqb-3}
\vert \langle z, v_i - z_i \rangle \vert \leq (\sqrt{\varepsilon'} + \frac{\eta}{2})\frac{\Vert z \Vert^{2}}{2} + \frac{\Vert z_i - v_i \Vert^{2}}{2(\sqrt{\varepsilon'} + \frac{\eta}{2})} \leq (\sqrt{\varepsilon'} + \frac{\eta}{2}) \frac{\Vert z \Vert^{2}}{2} + 2 \sqrt{\varepsilon'} + \eta .
\end{equation}
Due to \eqref{ineqb-2} we get that
\begin{equation*}
    \begin{aligned}
    &\langle (x_1 - z_1 - (3 \sqrt{\varepsilon_{1}'} + \eta)b_1) -(x_2-z_2 - (3 \sqrt{\varepsilon_{2}'} + \eta)b_2), v_1 - v_2 \rangle\\
    =& \ \langle (3\sqrt{\varepsilon_{2}'} + \eta)b_2 - (3\sqrt{\varepsilon_{1}'} + \eta)b_1, v_1 - v_2 \rangle + \langle x_1 - x_2, v_1 - v_2 \rangle - \langle z_1 - z_2, v_1 - v_2 \rangle\\
    \leq & \ 24\varepsilon' + 20\sqrt{\varepsilon'}\eta + 4\eta^{2} + (6\sqrt{\varepsilon'} + 2\eta)\Vert z_1 - z_2 \Vert + \langle x_1 - x_2, v_1 - v_2 \rangle - \langle z_1 - z_2, v_1 - v_2 \rangle\\
    = & \ 24\varepsilon' + 20\sqrt{\varepsilon'}\eta + 4\eta^{2} + (6\sqrt{\varepsilon'} + 2\eta)\Vert z_1 - z_2 \Vert +\langle x_1 - x_2, v_1 - z_1 \rangle + \langle x_1 - x_2, z_1 - z_2 \rangle \\
    &+ \langle x_1 - x_2, z_2 - v_2 \rangle- \langle z_1 - z_2, v_1 - z_1 \rangle - \Vert z_1 - z_2 \Vert^{2} - \langle z_1 - z_2, z_2 - v_2 \rangle\\
    \leq & \ 24\varepsilon' + 20\sqrt{\varepsilon'}\eta + 4\eta^{2} + (6\sqrt{\varepsilon'} + 2\eta)\Vert z_1 - z_2 \Vert + \langle x_1 - x_2, z_1 - z_2 \rangle\\
    &+ (\sqrt{\varepsilon'} + \frac{\eta}{2}) \Vert x_1 - x_2 \Vert^{2} + (\sqrt{\varepsilon'} + \frac{\eta}{2}) \Vert z_1 - z_2 \Vert^{2} + 8 \sqrt{\varepsilon'} + 4 \eta - \Vert z_1 - z_2 \Vert^{2} ,
    \end{aligned}
\end{equation*}
where we have used \eqref{ineqb-3} in the last inequality with $z = x_1 - x_2$ and $z = z_1 - z_2$. Next, by noting that $(6\sqrt{\varepsilon'} + 2 \eta) \Vert z_1 - z_2 \Vert \leq 3 \sqrt{\varepsilon'} + \eta + ( 3 \sqrt{\varepsilon'} + \eta)\Vert z_1 - z_2 \Vert^{2}$, we obtain that
\begin{equation*}
\begin{aligned}
&\langle (x_1 - z_1 - (3 \sqrt{\varepsilon_{1}'} + \eta)b_1) -(x_2-z_2 - (3 \sqrt{\varepsilon_{2}'} + \eta)b_2), v_1 - v_2 \rangle\\
\leq & \ 24\varepsilon' + 20\sqrt{\varepsilon'}\eta + 4\eta^{2} + 11 \sqrt{\varepsilon'} + 5 \eta + (\sqrt{\varepsilon'} + \frac{\eta}{2}) \Vert x_1 - x_2 \Vert^{2}\\
& + \langle x_1 - x_2, z_1 - z_2 \rangle - (1 - 4 \sqrt{\varepsilon'} - \frac{3 \eta}{2})\Vert z_1 - z_2 \Vert^{2} .
\end{aligned}
\end{equation*}
Therefore, from inequality \eqref{ineqb-1} and the above calculations, it follows that
\begin{equation*}
    \begin{aligned}
    \hspace{-3mm} -\frac{4\sqrt{\varepsilon^{\prime}}+\alpha}{\rho}\Vert v_1-v_2\Vert^2&\leq  (4\sqrt{\varepsilon^{\prime}}+\alpha)\langle \zeta_1-\zeta_2,v_1-v_2\rangle\\
    &= \langle (x_1 - z_1 - (3 \sqrt{\varepsilon_{1}'} + \eta)b_1) -(x_2-z_2 - (3 \sqrt{\varepsilon_{2}'} + \eta)b_2), v_1 - v_2 \rangle\\
    \leq & \ 24\varepsilon' + 20\sqrt{\varepsilon'}\eta + 4\eta^{2} + 11 \sqrt{\varepsilon'} + 5 \eta + (\sqrt{\varepsilon'} + \frac{\eta}{2}) \Vert x_1 - x_2 \Vert^{2}\\
& + \langle x_1 - x_2, z_1 - z_2 \rangle - (1 - 4 \sqrt{\varepsilon'} - \frac{3 \eta}{2})\Vert z_1 - z_2 \Vert^{2} .
    \end{aligned}
\end{equation*}
Finally, by using that $\Vert v_1 - v_2 \Vert^{2} \leq 16 \varepsilon' + 16 \sqrt{\varepsilon'}\eta +4 \eta^{2} + 4\sqrt{\varepsilon'} + 2 \eta + (1 +4\sqrt{\varepsilon'} +2\eta) \Vert z_1 - z_2 \Vert^{2}$, we get that
\begin{equation*}
\begin{aligned}
    &\left[ 1-\frac{\alpha}{\rho}-4\sqrt{\varepsilon'} \left(1 + \frac{1}{\rho}\right) - \frac{3 \eta}{2} - \left(\frac{4\sqrt{\varepsilon'} + \alpha}{\rho}\right) \left(4 \sqrt{\varepsilon'} + 2 \eta\right)\right] \Vert z_1-z_2\Vert^2\\
    &\leq  \left( \sqrt{\varepsilon^{\prime}} + \frac{\eta}{2} \right) \Vert x_1 - x_2 \Vert^{2} +  \langle x_1 - x_2, z_1 - z_2 \rangle + \left( \frac{4 \sqrt{\varepsilon'} + \alpha}{\rho}\right) \left( 16 \varepsilon' + 16 \sqrt{\varepsilon'}\eta + 4 \eta^{2} + 4 \sqrt{\varepsilon'} + 2\eta\right)\\
    & \hspace{25mm} + 20\sqrt{\varepsilon'}\eta   + 24\varepsilon' +  11 \sqrt{\varepsilon'} + 4\eta^{2} + 5 \eta ,
\end{aligned}
\end{equation*}
which proves the desired inequality.
\end{proof}

\section{Inexact Catching-Up Algorithm for Sweeping Processes}\label{Sec-3}

In this section, based on the concept of $\varepsilon-\eta$ projection, we propose an inexact  
 catching-up algorithm for the existence of solutions to the sweeping process:
\begin{equation}\label{SP}
\left\{
\begin{aligned}
\dot{x}(t) &\in -N(C(t) ; x(t))+F(t, x(t)) \quad \textrm{ a.e. } t \in[0, T], \\
x(0) & =  x_0 \in C(0),
\end{aligned}\right.
\end{equation}
where $C\colon [0, T] \rightrightarrows \mathcal{H}$ is a set-valued map with closed values in a Hilbert space $\mathcal{H}$, $N(C(t) ; x)$ stands for the Clarke normal cone to $C(t)$ at $x$, and $F\colon [0, T] \times \mathcal{H} \rightrightarrows \mathcal{H}$ is a given set-valued map with nonempty closed and convex values. \\
\noindent The proposed algorithm is given as follows. For $n \in \mathbb{N}^*$, let $\left(t_k^n: k=0,1, \ldots, n\right)$ be a uniform partition of $[0, T]$ with uniform time step $\mu_n:=T / n$. Let $\left(\varepsilon_n, \eta_n\right)$ be a sequence of positive numbers such that $\varepsilon_n / \mu_n^2 \rightarrow 0$ and $\eta_n/\mu_n \to 0$. We consider a sequence of piecewise continuous linear approximations $\left(x_n\right)$ defined as $x_n(0)=x_0$ and for any $k \in\{0, \ldots, n-1\}$ and $t \in ] t_k^n, t_{k+1}^n ]$
\begin{equation}\label{piecewise_construction}
x_n(t)=x_k^n+\frac{t-t_k^n}{\mu_n}\left(x_{k+1}^n-x_k^n-\int_{t_k^n}^{t_{k+1}^n} f\left(s, x_k^n\right)\mathrm{d} s\right)+\int_{t_k^n}^t f\left(s, x_k^n\right) \mathrm{d} s,
\end{equation}
where $x_0^n=x_0$ and
\begin{equation}\label{approx_proj_step}
x_{k+1}^n \in \proj_{C\left(t_{k+1}^n\right)}^{\varepsilon_n, \eta_n}\left(x_k^n+\int_{t_k^n}^{t_{k+1}^n} f\left(s, x_k^n\right) \mathrm{d} s\right) \text { for } k \in\{0,1, \ldots, n-1\} .
\end{equation}
Here $f(t, x)$ denotes any selection of $F(t, x)$ such that $f(\cdot, x)$ is measurable for all $x \in \mathcal{H}$. For simplicity, we consider $f(t, x) \in \operatorname{proj}_{F(t, x)}^\gamma(0)$ for some $\gamma>0$.

The above algorithm will be called \emph{inexact catching-up algorithm} because the projection is not necessarily exactly calculated. We will prove that the above algorithm converges for several families of moving sets as long as inclusion \eqref{approx_proj_step} is verified.
Let us consider functions $\delta_n(\cdot)$ and $\theta_n(\cdot)$ defined as
$$
\delta_n(t):= \begin{cases}
t_k^n & \text { if } t \in[t_k^n, t_{k+1}^n[\\
t_{n-1}^n & \text { if } t=T,
\end{cases} \text { and } \theta_n(t):= \begin{cases}t_{k+1}^n & \text { if } t \in [t_k^n, t_{k+1}^n[ \\
T & \text { if } t=T .\end{cases}.
$$
In what follows, we show useful properties of the above algorithm, which will help in proving the existence of solutions for the sweeping process \eqref{SP} in three cases:
\begin{enumerate}[(i)]
    \item The map $t \rightrightarrows C(t)$ takes uniformly prox-regular values.
    \item The map $t \rightrightarrows C(t)$ takes subsmooth and ball-compact values.
    \item $C(t) \equiv C$ in $[0, T]$ and $C$ is ball-compact.
\end{enumerate}
In what follows, $F\colon [0, T] \times \mathcal{H} \rightrightarrows \mathcal{H}$ will be a set-valued map with nonempty, closed, and convex values. Moreover, we will consider the following conditions:
\begin{itemize}
\item[$\left(\H_1^F\right)$] For all $t \in[0, T], F(t, \cdot)$ is upper semicontinuous from $\H$ into $\H_w$.
\item[$\left(\H_2^F\right)$] There exists $h\colon \mathcal{H} \rightarrow \mathbb{R}^{+}$ Lipschitz continuous (with constant $L_h>0$) such that
$$
d(0, F(t, x)):=\inf \{\|w\|: w \in F(t, x)\} \leq h(x) \quad \textrm{ for all }x\in \mathcal{H} \textrm{ and a.e. } t\in [0,T]. 
$$
\item[$\left(\H_3^F\right)$] There is $\gamma>0$ such that the set-valued map $(t, x) \rightrightarrows \operatorname{proj}_{F(t, x)}^\gamma(0)$ has a selection $f\colon [0, T] \times \mathcal{H} \rightarrow \mathcal{H}$ with $f(\cdot, x)$ is measurable for all $x \in \mathcal{H}$.
\end{itemize}
\noindent The following proposition, proved in \cite{MR2731287}, provides a condition for the feasibility of hypothesis $\left(\H_3^F\right)$.
\begin{proposition}
    Let us assume that $\H$ is a separable Hilbert space. Moreover we suppose $F(\cdot,x)$ is measurable for all $x\in\H$, then $\left(\H_3^F\right)$ holds for all $\gamma>0$.
\end{proposition}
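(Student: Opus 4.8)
The plan is to fix $x \in \H$ and $\gamma > 0$ once and for all, and to build the required measurable selection $t \mapsto f(t,x)$ of $t \rightrightarrows \proj_{F(t,x)}^{\gamma}(0)$ by combining a Castaing representation of the measurable map $t \rightrightarrows F(t,x)$ with a \emph{first admissible index} rule. Since no joint measurability in $(t,x)$ is demanded --- only that each slice $f(\cdot, x)$ be measurable --- it suffices to carry out the construction separately for each fixed $x$, so the values of $f$ at different points $x$ need not be coherent.

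First I would invoke separability of $\H$ together with the measurability of $F(\cdot, x)$ and the fact that its values are nonempty and closed: by the equivalence recalled after \cite[Theorem 6.2.20]{MR2527754}, the map admits a Castaing representation, i.e. a countable family of measurable selections $(u_n(\cdot))_{n \in \N}$ with $u_n(t) \in F(t,x)$ and $F(t,x) = \overline{\{u_n(t): n \in \N\}}$ for every $t$. Because $\{u_n(t)\}_n$ is dense in $F(t,x)$, one has $d_{F(t,x)}(0) = \inf_{n} \|u_n(t)\|$, so $t \mapsto d_{F(t,x)}(0)$ is measurable, being a countable infimum of the measurable functions $t \mapsto \|u_n(t)\|$.

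I would then define, for each $t$,
\[
N(t) := \min\{\, n \in \N : \|u_n(t)\|^2 < d_{F(t,x)}^2(0) + \gamma \,\}.
\]
The minimum is attained: by definition of the infimum the index set is nonempty, since otherwise $\inf_n \|u_n(t)\|^2 \ge d_{F(t,x)}^2(0) + \gamma > d_{F(t,x)}^2(0)$, a contradiction. Moreover each level set $\{t : N(t) = n\}$ is measurable, being the intersection of $\{t : \|u_n(t)\|^2 < d_{F(t,x)}^2(0) + \gamma\}$ with the sets $\{t : \|u_m(t)\|^2 \ge d_{F(t,x)}^2(0) + \gamma\}$ for $m < n$, all measurable by the previous step. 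Setting $f(t,x) := u_{N(t)}(t)$, i.e. $f(t,x) = u_n(t)$ on $\{N(t) = n\}$, yields a measurable function of $t$ (a countable gluing of measurable selections along a measurable partition) satisfying $f(t,x) \in F(t,x)$ and $\|f(t,x)\|^2 < d_{F(t,x)}^2(0) + \gamma$, i.e. $f(t,x) \in \proj_{F(t,x)}^{\gamma}(0)$ for every $t$.

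The only genuinely delicate point is that $\proj_{F(t,x)}^{\gamma}(0)$ has \emph{relatively open} values, so the classical Kuratowski--Ryll-Nardzewski selection theorem does not apply directly; this is exactly what the first-index rule circumvents, since it produces an honest selection landing strictly inside the approximate-projection set while preserving measurability. The remaining verifications --- the density formula for the distance, nonemptiness of the index set, and measurability of the level sets --- are routine once the Castaing representation is available.
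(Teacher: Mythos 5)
Your construction is correct. Note, however, that the paper does not actually prove this proposition: it simply cites \cite{MR2731287}, so there is no in-text argument to compare against. Your argument is a legitimate standalone proof and is essentially the standard one: since $\H$ is separable and $F(\cdot,x)$ is measurable with nonempty closed values, Castaing's theorem yields a representation $(u_n(\cdot))$, whence $t\mapsto d_{F(t,x)}(0)=\inf_n\|u_n(t)\|$ is measurable, the first-admissible-index set is nonempty because $\inf_n\|u_n(t)\|^2=d^2_{F(t,x)}(0)<d^2_{F(t,x)}(0)+\gamma$, and the glued selection $u_{N(t)}(t)$ is measurable and lands in $\proj^{\gamma}_{F(t,x)}(0)$. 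You also correctly identify why a direct appeal to Kuratowski--Ryll-Nardzewski fails (the approximate projection has relatively open, non-closed values); an alternative that avoids the index rule is to apply the selection theorem to the measurable closed-valued map $t\rightrightarrows F(t,x)\cap\mathbb{B}\bigl[0,(d^2_{F(t,x)}(0)+\gamma/2)^{1/2}\bigr]$, but this buys nothing over your argument. The only cosmetic issue is the citation: the equivalence you want is Castaing's characterization of measurability via dense countable selections, not the graph-measurability equivalence stated after \cite[Theorem 6.2.20]{MR2527754}.
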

\noindent Now, we establish the main properties of the inexact catching-up algorithm. 
\begin{theorem}\label{scheme}
Assume, in addition to $\left(\mathcal{H}_1^F\right),\left(\mathcal{H}_2^F\right)$ and $\left(\mathcal{H}_3^F\right)$, that $C\colon [0, T] \rightrightarrows \mathcal{H}$ is a set-valued map with nonempty and closed values such that
\begin{equation}\label{lip_movingset}
\operatorname{Haus}(C(t), C(s)) \leq L_C|t-s| \text { for all } t, s \in[0, T] .
\end{equation}

Then, the sequence of functions $\left(x_n\colon [0, T] \rightarrow \mathcal{H}\right)$ generated by numerical scheme \eqref{piecewise_construction} and \eqref{approx_proj_step} satisfies the following properties:
\begin{enumerate}[(a)]
\item There are non-negative constants $K_1, K_2, K_3, K_4, K_5$ such that for all $n \in \N$ and $t \in [0, T]$:
\begin{enumerate}[(i)]
    \item $d_{C(\theta_n(t))}(x_n(\delta_n(t)) + \int_{\delta_n(t)}^{\theta_n(t)} f(s, x_n(\delta_n(t))) \mathrm{d}s) \leq (L_C + h(x(\delta_n(t))) + \sqrt{\gamma}) \mu_n + \eta_n$.
    \item $\Vert x_n(\theta_n(t)) - x_0 \Vert \leq K_1$.
    \item $\Vert x_n(t) \Vert \leq K_2$.
    \item $d_{C(\theta_n(t))}(x_n(\delta_n(t)) + \int_{\delta_n(t)}^{\theta_n(t)} f(s, x_n(\delta_n(t))) \mathrm{d}s) \leq K_3 \mu_n + \eta_n$.
    \item $\Vert x_n(\theta_n(t)) - x_n(\delta_n(t)) \Vert \leq K_4 \mu_n + \sqrt{\varepsilon_n} + \eta_n$.
    \item $\Vert x_n(t) - x_n(\theta_n(t)) \Vert \leq K_5 \mu_n + 2 \sqrt{\varepsilon_n} +2 \eta_n$.
\end{enumerate}
\item There exists $K_6 > 0$ such that for all $t \in [0, T]$ and $m,n \in \N$ we have
$$d_{C(\theta_n(t))}(x_m(t)) \leq K_6 \mu_m + L_C \mu_n + 2 \sqrt{\varepsilon_m} + 3 \eta_m .$$
\item There exists $K_7 > 0$ such that for all $n \in \N$
$$\Vert \dot{x}_n(t) \Vert \leq K_7 \textrm{ a.e. } t \in [0,T] .$$
\item For all $n \in \N$ and $k \in \{0, 1, \ldots, n-1\}$, there is $v_{k+1}^{n} \in C(t_{k+1}^{n})$ such that for all $t \in ]t_k^{n}, t_{k+1}^{n}[$:
\begin{equation}\label{xn-dot-inclusion}
\dot{x}_n(t) \in - \frac{\lambda_n(t)}{\mu_n} \partial_{P}d_{C(\theta_n(t))}(v_{k+1}^{n}) + f(t, x_n(\delta_n(t))) + \frac{(3\sqrt{\sigma_n} + \eta_n)}{\mu_n} \B .
\end{equation}
where $\lambda_n(t) = 4 \sqrt{\sigma_n} + (L_C + h(x_n(\delta_n(t))) + \sqrt{\gamma})\mu_n + \eta_n$ and $\sigma_n = 2 \varepsilon_n + 2K_3 \eta_n \mu_n + 4\eta_n^{2}$. Moreover, $\Vert v_{k+1}^{n} - x_n(\theta_n(t)) \Vert < 2 \sqrt{\sigma_n} + \eta_n.$
\end{enumerate}
\end{theorem}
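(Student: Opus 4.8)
The plan is to derive every item from two building blocks: a per-step control of the distance from the point being projected to the target set, and a discrete Gronwall argument that turns the resulting one-step displacement bound into uniform bounds. Throughout, write $y_k^n := x_k^n+\int_{t_k^n}^{t_{k+1}^n} f(s,x_k^n)\,\mathrm ds$, so that $x_{k+1}^n\in\proj_{C(t_{k+1}^n)}^{\varepsilon_n,\eta_n}(y_k^n)$. Two elementary facts are used repeatedly. First, $(\H_2^F)$ together with the definition of $\proj^\gamma$ gives $\|f(t,x)\|<\sqrt{h(x)^2+\gamma}\le h(x)+\sqrt\gamma$, hence $\|y_k^n-x_k^n\|\le (h(x_k^n)+\sqrt\gamma)\mu_n$. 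Second, for $k\ge 1$ the inclusion $x_k^n\in C(t_k^n)+\eta_n\B$ provides $s_k\in C(t_k^n)$ with $\|x_k^n-s_k\|\le\eta_n$ (for $k=0$ take $s_0=x_0\in C(0)$), so by \eqref{lip_movingset},
$$d_{C(t_{k+1}^n)}(y_k^n)\le d_{C(t_{k+1}^n)}(s_k)+\|y_k^n-s_k\|\le (L_C+h(x_k^n)+\sqrt\gamma)\mu_n+\eta_n,$$
which is (a)(i). Combining this with $\|y_k^n-x_{k+1}^n\|<d_{C(t_{k+1}^n)}(y_k^n)+\sqrt{\varepsilon_n}$ (from the definition of the $\varepsilon_n$-$\eta_n$ projection) yields the one-step bound $\|x_{k+1}^n-x_k^n\|\le (L_C+2h(x_k^n)+2\sqrt\gamma)\mu_n+\eta_n+\sqrt{\varepsilon_n}$.

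I expect the main obstacle to be promoting this to the uniform bounds (a)(ii)--(iii), since the right-hand side contains $h(x_k^n)$, which through $h(x_k^n)\le h(x_0)+L_h\|x_k^n-x_0\|$ feeds back into the quantity being estimated. Setting $a_k:=\|x_k^n-x_0\|$, the displacement bound becomes a linear recursion $a_{k+1}\le (1+2L_h\mu_n)\,a_k+C_0$ with $C_0=(L_C+2h(x_0)+2\sqrt\gamma)\mu_n+\eta_n+\sqrt{\varepsilon_n}=O(\mu_n)$, where one crucially uses $\eta_n/\mu_n\to0$ and $\varepsilon_n/\mu_n^2\to0$ so that $\eta_n+\sqrt{\varepsilon_n}=O(\mu_n)$. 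Iterating and bounding $(1+2L_h\mu_n)^k\le e^{2L_hT}$ for $k\le n$, while noting that $C_0/\mu_n$ stays bounded, gives $a_k\le K_1$ uniformly in $n,k$; this is (a)(ii), and (a)(iii) follows by substituting into \eqref{piecewise_construction}. Once the iterates are bounded, $h(x_k^n)\le h(x_0)+L_hK_1$ is bounded, so (a)(i) upgrades to (a)(iv) and the one-step bound upgrades to (a)(v).

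The remaining estimates of part (a) and parts (b)--(c) are then direct. For (a)(vi) one expands $x_n(t)-x_n(\theta_n(t))$ from \eqref{piecewise_construction} and bounds each term using (a)(v) and $\|f\|\le h+\sqrt\gamma$. For (b), with $t\in\,]t_k^m,t_{k+1}^m]$ one writes $d_{C(\theta_n(t))}(x_m(t))\le\|x_m(t)-x_{k+1}^m\|+d_{C(\theta_n(t))}(x_{k+1}^m)$, bounds the first term by (a)(vi), and controls the second via the feasibility $d_{C(t_{k+1}^m)}(x_{k+1}^m)\le\eta_m$ together with $|d_{C(\theta_n(t))}(\cdot)-d_{C(t_{k+1}^m)}(\cdot)|\le L_C|\theta_n(t)-\theta_m(t)|\le L_C(\mu_n+\mu_m)$, absorbing the $\mu_m$-terms into $K_6$. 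For (c), differentiating \eqref{piecewise_construction} gives $\dot x_n(t)=\mu_n^{-1}(x_{k+1}^n-y_k^n)+f(t,x_k^n)$, and $\mu_n^{-1}\|x_{k+1}^n-y_k^n\|\le K_3+\eta_n/\mu_n+\sqrt{\varepsilon_n}/\mu_n$ by (a)(iv), the two ratios being bounded by hypothesis.

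Finally, for (d) I apply Lemma \ref{aprox-proj-II} with $S=C(t_{k+1}^n)$, $x=y_k^n$, $z=x_{k+1}^n$, obtaining $v_{k+1}^n$ with $\|x_{k+1}^n-v_{k+1}^n\|<2\sqrt{\varepsilon'}+\eta_n$ and $y_k^n-x_{k+1}^n\in(4\sqrt{\varepsilon'}+d_{C(t_{k+1}^n)}(y_k^n))\,\partial_P d_{C(t_{k+1}^n)}(v_{k+1}^n)+(3\sqrt{\varepsilon'}+\eta_n)\B$, where $\varepsilon'=\varepsilon_n+2\eta_n(d_{C(t_{k+1}^n)}(y_k^n)+\sqrt{\varepsilon_n})+\eta_n^2$. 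A short computation using (a)(iv) and $2\eta_n\sqrt{\varepsilon_n}\le\eta_n^2+\varepsilon_n$ shows $\varepsilon'\le\sigma_n$, hence $3\sqrt{\varepsilon'}+\eta_n\le 3\sqrt{\sigma_n}+\eta_n$ and $c:=4\sqrt{\varepsilon'}+d_{C(t_{k+1}^n)}(y_k^n)\le\lambda_n(t)$. The one subtle point is enlarging the coefficient from $c$ to $\lambda_n(t)$ \emph{without} enlarging the ball: since $\partial_P d_{C(t_{k+1}^n)}(v_{k+1}^n)=N^P(C(t_{k+1}^n);v_{k+1}^n)\cap\B$ and $N^P$ is a cone, for $0\le c\le\lambda_n(t)$ the map $w\mapsto (c/\lambda_n(t))w$ sends the truncated cone into itself, so $c\,\partial_P d_{C(t_{k+1}^n)}(v_{k+1}^n)\subset\lambda_n(t)\,\partial_P d_{C(t_{k+1}^n)}(v_{k+1}^n)$. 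Dividing the resulting inclusion by $-\mu_n$ and using $\dot x_n(t)-f(t,x_k^n)=-\mu_n^{-1}(y_k^n-x_{k+1}^n)$ yields \eqref{xn-dot-inclusion}, while the final assertion is $\|v_{k+1}^n-x_n(\theta_n(t))\|=\|v_{k+1}^n-x_{k+1}^n\|<2\sqrt{\varepsilon'}+\eta_n\le 2\sqrt{\sigma_n}+\eta_n$.
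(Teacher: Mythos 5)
Your proposal is correct and follows essentially the same route as the paper: the per-step distance estimate via the $\eta_n$-feasibility of the iterates and the Hausdorff--Lipschitz bound, the discrete Gronwall recursion for the uniform bounds, and the application of Lemma \ref{aprox-proj-II} with the estimate $\varepsilon'\le\sigma_n$ for part (d). Your explicit justification that $c\,\partial_P d_S(v)\subset\lambda\,\partial_P d_S(v)$ for $0\le c\le\lambda$ (via the cone property of $N^P$) is a small clarification of a step the paper performs implicitly, but it does not change the argument.
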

\begin{proof}
\textit{(a)}: Set $\mu_n := T/n$ and let $(\varepsilon_n)$ and $(\eta_n)$ be sequences of non-negative numbers such that $\varepsilon_n/\mu_n^{2} \rightarrow 0$ and $\eta_n/ \mu_n \to 0$. We define $\mathfrak{c} := \sup_{n \in \N} \frac{\sqrt{\varepsilon_n} + \eta_n}{\mu_n}$. We denote by $L_h$ the Lipschitz constant of $h$. For all $t \in [0, T]$ and $n \in \N$, we define $\tau_n(t) := x_n(\delta_n(t)) + \int_{\delta_n(t)}^{\theta_n(t)} f(s, x_n(\delta_n(t)) \mathrm{d}s$ since distances functions are 1-Lipschitz
\begin{equation}\label{bound1th1}
d_{C(\theta_n(t))}(\tau_n(t)) \leq d_{C(\theta_n(t))}(x_n(\delta_n(t))) + \Vert \int_{\delta_n(t)}^{\theta_n(t)} f(s, x_n(\delta_n(t))) \mathrm{d}s \Vert .
\end{equation}
On the one hand, by virtue of  \eqref{approx_proj_step}, we have that $x_k^{n} \in C(t_k^{n}) + \eta_n \B$, which implies that  for some $b_n(t) \in \B$, $x_n(\delta_n(t)) - \eta_n b_n(t) \in C(\delta_n(t))$. Then, 
\begin{equation*}
\begin{aligned}
d_{C(\theta_n(t))}(x_n(\delta_n(t))) &= d_{C(\theta_n(t))}(x_n(\delta_n(t))) - d_{C(\delta_n(t)}(x_n(\delta_n(t)) - \eta_n b_n(t))\\
&\leq d_{C(\theta_n(t))}(x_n(\delta_n(t))) - d_{C(\delta_n(t))}(x_n(\delta_n(t))) + \Vert \eta_n b_n(t) \Vert .
\end{aligned}
\end{equation*}
Then, by using \eqref{lip_movingset} and the fact that $\operatorname{Haus}(A,B) = \sup_{w \in \mathcal{H}} \vert d_A(w) - d_B(w) \vert$, we obtain that
\begin{equation*}
d_{C(\theta_n(t))}(x_n(\delta_n(t)))\leq L_C \mu_n + \eta_n.
\end{equation*}
On the other hand, since $f(t, x_n(\delta_n(t))) \in \proj_{F(t, x_n(\delta_n(t)))}^{\gamma}(0)$  and $\left(\H_2^F\right)$ holds, we get that 
$$\Vert f(s,x_n(\delta_n(t))) \Vert \leq h(x_n(\delta_n(t))) + \sqrt{\gamma}.$$ Then, it follows from \eqref{bound1th1} that
\begin{equation*}
\begin{aligned}
d_{C(\theta_n(t))}(\tau_n(t)) &\leq L_C \mu_n + \eta_n + \int_{\delta_n(t)}^{\theta_n(t)} (h(x_n(\delta_n(t))) + \sqrt{\gamma}) \mathrm{d}s\\
&\leq (L_C + h(x_n(\delta_n(t))) + \sqrt{\gamma})\mu_n + \eta_n ,
\end{aligned}
\end{equation*}
which proves $(i)$. Moreover, since $x_n(\theta_n(t)) \in \proj_{C(\theta_n(t))}^{\varepsilon_n, \eta_n}(\tau_n(t))$, we get that
\begin{equation}\label{bound2th1}
\begin{aligned}
\Vert x_n(\theta_n(t)) - \tau_n(t) \Vert & \leq d_{C(\theta_n(t))}(\tau_n(t))) + \sqrt{\varepsilon_n}\\
& \leq (L_C + h(x_n(\delta_n(t))) + \sqrt{\gamma})\mu_n + \eta_n + \sqrt{\varepsilon_n} ,
\end{aligned}
\end{equation}
which yields
\begin{equation}\label{bound3th1}
\begin{aligned}
\Vert x_n(\theta_n(t)) - x_n(\delta_n(t)) \Vert  & \leq (L_C + 2h(x_n(\delta_n(t))) + 2 \sqrt{\gamma}) \mu_n + \eta_n + \sqrt{\varepsilon_n}\\
& \leq (L_C + 2h(x_0) + 2L_h \Vert x_n(\delta_n(t)) - x_0 \Vert + 2\sqrt{\gamma})\mu_n + \eta_n + \sqrt{\varepsilon_n} ,
\end{aligned}
\end{equation}
where we have used that $h$ is Lipschitz continuous with constant $L_h > 0$ in the last inequality.
Hence for all $t \in [0, T]$
\begin{equation*}
\begin{aligned}
\Vert x_n(\theta_n(t)) - x_0 \Vert \leq & (1 + 2L_h \mu_n) \Vert x_n(\theta_n(t)) - x_0 \Vert\\
& + (L_C + 2h(x_0) + 2\sqrt{\gamma}) \mu_n + \eta_n + \sqrt{\varepsilon_n} .
\end{aligned}
\end{equation*}
The above inequality means that for all $k \in \{0, 1, \ldots, n-1\}:$
$$\Vert x_{k+1}^{n} - x_0 \Vert \leq (1 + 2L_h \mu_n) \Vert x_k^{n} - x_0 \Vert + (L_C + 2h(x_0) + 2\sqrt{\gamma}) \mu_n + \eta_n + \sqrt{\varepsilon_n} .$$
Then, by \cite[p.183]{MR1488695}, we obtain that for all $k \in \{0, \ldots, n-1\}$
\begin{equation}\label{bound4th1}
\begin{aligned}
\Vert x_{k+1}^{n} - x_0 \Vert \leq & \ (k+1)((L_C + 2h(x_0) + 2\sqrt{\gamma})\mu_n + \eta_n + \sqrt{\varepsilon_n}) \exp(2L_h(k+1)\mu_n)\\
\leq & \ T(L_C + 2h(x_0) + 2\sqrt{\gamma} + \mathfrak{c}) \exp(2L_h T) =: K_1 ,
\end{aligned}
\end{equation}
which proves $(ii)$.\\
\noindent $(iii)$: By definition of $x_n$, for $t \in ]t_k^n, t_{k+1}^n]$ and $k \in \{0, \ldots, n-1\}$, using \eqref{piecewise_construction}
\begin{equation*}
\begin{aligned}
\Vert x_n(t) \Vert \leq & \ \Vert x_k^n \Vert + \Vert x_{k+1}^n - \tau_n(t) \Vert + \int_{t_k^n}^n \Vert f(s, x_k^n) \Vert \mathrm{d}s\\
\leq & \ K_1 + \Vert x_0 \Vert + (L_C + \sqrt{\gamma} + h(x_k^n))\mu_n + \eta_n + \sqrt{\varepsilon_n} + (h(x_k^n) + \sqrt{\gamma})\mu_n ,
\end{aligned}
\end{equation*}
where we have used \eqref{bound2th1} and \eqref{bound4th1}. Moreover, it is clear that for $k \in \{0, \ldots, n\}$
$$h(x_k^n) \leq h(x_0) + L_h \Vert x_k^n -x_0 \Vert \leq h(x_0) + L_h K_1$$
Therefore, for all $t \in [0,T]$
\begin{equation*}
\begin{aligned}
\Vert x_n(t) \Vert \leq & \ K_1 + \Vert x_0 \Vert + (L_C + 2(h(x_0) + L_h K_1 + \sqrt{\gamma}))\mu_n + \eta_n + \sqrt{\varepsilon_n} \\
\leq & \ K_1 + \Vert x_0 \Vert + T(L_C + 2(h(x_0) +L_h K_1 + \sqrt{\gamma}) + \mathfrak{c}) := K_2 ,
\end{aligned}
\end{equation*}
which proves $(iii)$. \\
\noindent  $(iv)$ By using the Lipschitz continuity of $h$, we see that $h(x(\delta_n(t)) \leq  \ h(x_0) + L_h \Vert x_n(\delta_n) - x_0 \Vert$. Hence, by virtue of $(i)$ and $(iii)$, there exists $K_3 =  (L_C + h(x_0) + L_h(K_2 + \Vert x_0 \Vert) + \sqrt{\gamma}) > 0$ for which $(iv)$ holds for all $n \in \N$.\\
\noindent $(v)$: From \eqref{bound3th1} and \eqref{bound4th1} it is easy to see that there exists $K_4 > 0$ such that for all $n \in \N$ and $t \in [0,T]$: $\Vert x_n(\theta_n(t)) - x_n(\delta_n(t)) \Vert \leq K_4 \mu_n + \sqrt{\varepsilon_n} + \eta_n$.\\
\noindent $(vi)$: To conclude this part, we consider $t \in ]t_k^n, t_{k+1}^{n}]$ for some $k \in \{0, \ldots, n-1\}$.\\
Then $x_n(\theta_n(t)) = x_{k+1}^{n}$ and also
\begin{equation*}
\begin{aligned}
\Vert x_n(\theta_n(t)) - x_n(t) \Vert \leq & \ \Vert x_{k+1}^n - x_k^{n} \Vert + \Vert x_{k+1}^{n} - \tau_n(t) \Vert + \int_{t_{k}^{n}}^{t} \Vert f(s, x_k^{n}) \Vert \mathrm{d}s\\
\leq & \ K_4 \mu_n + \sqrt{\varepsilon_n} + \eta_n + (L_C + \sqrt{\gamma} + h(x_0) + L_h K_1)\mu_n + \sqrt{\varepsilon_n} + \eta_n\\
& \ + (h(x_k^{n}) + \sqrt{\gamma})\mu_n\\
\leq & \ \underbrace{(K_4 + L_C + 2(h(x_0) + L_h K_1) + 2\sqrt{\gamma}))}_{=: K_5} \mu_n + 2\sqrt{\varepsilon_n} + 2 \eta_n ,
\end{aligned}
\end{equation*}
and we conclude this first part.\\
\noindent 
\textit{(b)}: Let $m, n \in \N$ and $t \in [0,T]$, then
\begin{equation*}
\begin{aligned}
d_{C(\theta_n(t))}(x_m(t)) \leq & \ d_{C(\theta_n(t))}(x_m(\theta_m(t))) + \Vert x_m(\theta_m(t)) - x_m(t) \Vert\\
\leq & \ d_{C(\theta_n(t))}(x_m(\theta_m(t))) + K_5 \mu_m + 2\sqrt{\varepsilon_m} + 2\eta_m ,
\end{aligned}
\end{equation*}
where we have used $(v)$. Since $x_{k+1}^{m} \in C(t_{k+1}^{m}) + \eta_m \B$, we have that $x_m(\theta_m(t)) - \eta_m b_m(t) \in C(\theta_m(t))$ where $b_m(t) \in \B$, then we have 
\begin{equation*}
\begin{aligned}
d_{C(\theta_n(t))}(x_m(\theta_m(t))) = & \ d_{C(\theta_n(t))}(x_m(\theta_m(t))) - d_{C(\theta_m(t))}(x_m(\theta_m(t)) - \eta_m b_m(t)) \\
\leq & \ d_{C(\theta_n(t))}(x_m(\theta_m(t))) - d_{C(\theta_m(t))}(x_m(\theta_m(t))) + \eta_m \\
\leq & \ d_{H}(C(\theta_n(t)), C(\theta_m(t))) + \eta_m .
\end{aligned}
\end{equation*}
Therefore,
\begin{equation*}
\begin{aligned}
d_{C(\theta_n(t))}(x_m(t)) \leq & \ d_{H}(C(\theta_n(t)), C(\theta_m(t))) + \eta_m + K_5 \mu_m + 2\sqrt{\varepsilon_m} + 2\eta_m\\
\leq & \ L_C \vert \theta_n(t) - \theta_m(t) \vert + K_5 \mu_m + 2\sqrt{\varepsilon_m} + 3 \eta_m\\
\leq & \ L_C(\mu_n + \mu_m) + K_5 \mu_m + 2\sqrt{\varepsilon_m} + 3 \eta_m .
\end{aligned}
\end{equation*}
Hence, by setting $K_6 := K_5 + L_C$ we proved $(b)$. \\
\noindent \textit{(c)}: Let $n \in \N$, $k \in \{0, \ldots, n-1\}$ and $t \in ]t_k^n, t_{k+1}]$. Then,
\begin{equation*}
\begin{aligned}
\Vert \dot{x}_n(t) \Vert = & \ \Vert \mu_n^{-1} ( x_{k + 1} ^n - x_k^n - \int_{t_k^n}^{t_{k+1}^n} f(s, x_k^n) \mathrm{d}s) + f(t, x_k^n) \Vert \\
\leq & \ \frac{1}{\mu_n} \Vert x_n(\theta_n(t)) - \tau_n(t) \Vert + \Vert f(t,x_k^n) \Vert \\
\leq & \ \frac{1}{\mu_n}((L_C + h(x_k^n) + \sqrt{\gamma})\mu_n + \sqrt{\varepsilon_n} + \eta_n) + h(x_k^n) + \sqrt{\gamma}\\
\leq & \ \frac{\sqrt{\varepsilon_n} + \eta_n}{\mu_n} + L_C + 2(h(x_0) + L_h K_1 + \sqrt{\gamma})\\
\leq & \ \mathfrak{c} + L_C + 2(h(x_0) + L_h K_1 + \sqrt{\gamma}) =: K_7 ,
\end{aligned}
\end{equation*}
which proves \textit{(c)}.\\
\noindent \textit{(d)}: Fix $k \in \{0, \ldots, n-1\}$ and $t \in ]t_k^n, t_{k+1}^n[$. Then, $x_{k+1}^n \in \proj_{C(t_{k+1}^n)}^{\varepsilon_n, \eta_n}(\tau_n(t))$. Hence, by Lemma \ref{aprox-proj-II}, there exists $v_{k+1}^n \in C(t_{k+1}^n)$ such that $\Vert x_{k+1}^n - v_{k+1}^n \Vert < 2 \sqrt{\varepsilon_n^{\prime}} + \eta_n$ and 
$$\tau_n(t) - x_{k+1}^n \in \alpha_n(t) \partial_P d_{C(t_{k+1}^n)}(v_{k+1}^n) + (3\sqrt{\varepsilon_n^{\prime}} + \eta_n) \B, \quad \forall t \in ]t_k^n,t_{k+1}^n[,$$
where $\varepsilon_n^{\prime} = \varepsilon_n + 2\eta_n(d_{C(t_{k+1}^n)}(\tau_n(t)) + \sqrt{\varepsilon_n}) + \eta_n^{2}$ and $\alpha_n(t) = 4 \sqrt{\varepsilon_n^{\prime}} + d_{C(\theta_n(t))}(\tau_n(t))$. We observe that using (iii), we get $\varepsilon_n^{\prime} \leq  2 \varepsilon_n + 2K_3 \eta_n \mu_n + 4 \eta_n^{2} =: \sigma_n$ and $\sqrt{\sigma_n}/\mu_n \to 0$. By virtue of $(i)$,
$$\alpha_n(t) \leq 4 \sqrt{\sigma_n} + (L_C + h(x_n(\delta_n(t))) + \sqrt{\gamma})\mu_n + \eta_n =: \lambda_n(t) .$$
Then, for all $t \in ]t_k^n, t_{k+1}^n[$
$$- \mu_n(\dot{x}_n(t) - f(t, x_k^n)) \in \lambda_n(t) \partial_P d_{C(t_{k+1}^n)}(v_{k+1}^n) + (3 \sqrt{\sigma_n} + \eta_n) \B ,$$
which implies that for $t \in ]t_k^n, t_{k+1}^n[$
$$\dot{x_n}(t) \in - \frac{\lambda_n(t)}{\mu_n} \partial_{P}d_{C(\theta_n(t))}(v_{k+1}^{n}) + f(t, x_n(\delta_n(t))) + \frac{(3\sqrt{\sigma_n} + \eta_n)}{\mu_n} \B.$$

\end{proof}

\section{The Case of Prox-Regular Moving Sets}\label{Sec-4}

In this section, we prove the convergence of the inexact catching-up algorithm when the moving sets are uniformly prox-regular. Our results extend the convergence analysis carried out in the classical and inner approximate cases (see  \cite{MR2159846,MR4822735}).
\begin{theorem}
Suppose, in addition to the assumptions of Theorem \ref{scheme}, that $C(t)$ is $\rho$-uniformly prox-regular  for all $t\in [0,T]$, and for all $r > 0$, there exists a nonnegative integrable function $k_r$ such that for all $t\in [0,T]$ and $x_1, x_2\in r\B$ one has
\begin{equation}\label{monotonicity}
\langle v_1-v_2,x_1-x_2\rangle\leq k_r(t)\|x_1-x_2\|^2 \textrm{ for all } v_i \in F(t,x_i),  i=1,2.
\end{equation}
Then, the sequence of functions $(x_n)$ generated by the algorithm \eqref{piecewise_construction} and \eqref{approx_proj_step} converges uniformly to a Lipschitz continuous solution $x(\cdot)$ of \eqref{SP}. Moreover, if there exists $c \in L^{1}([0,T];\R_{+})$ such that
\begin{equation*}
\sup_{y \in F(t,x)} \Vert y \Vert \leq c(t)(\Vert x \Vert + 1) \textrm{ for all } x\in \H \textrm{ and  a.e. } t \in [0,T],
\end{equation*}
then the solution $x(\cdot)$ is unique.
\end{theorem}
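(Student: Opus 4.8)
The plan is to exploit the uniform bounds established in Theorem~\ref{scheme}. By part~\textit{(c)}, the functions $(x_n)$ are uniformly Lipschitz with constant $K_7$, and by part~\textit{(a)(iii)} they are uniformly bounded; hence $(\dot{x}_n)$ is bounded in $L^\infty([0,T];\H)$. First I would argue weak convergence: extract a subsequence so that $\dot{x}_n \rightharpoonup w$ weakly in $L^2([0,T];\H)$ and $x_n(t) \to x(t)$ for each $t$, with $x(t) = x_0 + \int_0^t w(s)\,\mathrm{d}s$, so $x$ is Lipschitz and $\dot{x}=w$. To upgrade pointwise-in-the-limit to genuine \emph{uniform} convergence I would use the Lipschitz equicontinuity of the $(x_n)$ together with an Arzelà--Ascoli-type argument; the subtlety is that $\H$ may be infinite-dimensional, so compactness in the state does not come for free. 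This is where the $\rho$-uniform prox-regularity is essential: I would establish a Cauchy estimate directly, differentiating $t \mapsto \tfrac12\Vert x_n(t)-x_m(t)\Vert^2$ and using the hypomonotonicity from Proposition~\ref{prop-prox-regular}(c) (or, more precisely, its approximate version in the preceding Proposition) to control $\langle \dot{x}_n-\dot{x}_m, x_n-x_m\rangle$.

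\textbf{The key differential inequality.} The heart of the matter is to combine three ingredients along the flow. For $t$ in the interior of a subinterval, part~\textit{(d)} gives
\[
\dot{x}_n(t) \in -\frac{\lambda_n(t)}{\mu_n}\,\partial_P d_{C(\theta_n(t))}(v_{k+1}^n) + f(t,x_n(\delta_n(t))) + \frac{3\sqrt{\sigma_n}+\eta_n}{\mu_n}\B,
\]
with $v_{k+1}^n$ close to $x_n(\theta_n(t))$ and, crucially, $v_{k+1}^n \in C(\theta_n(t))$. The proximal subgradient of the distance lies in the truncated proximal normal cone, so after passing to the vectors $v_{k+1}^n$ I can invoke the $1/\rho$-hypomonotonicity to bound $\langle \dot{x}_n(t)-\dot{x}_m(t),\,x_n(t)-x_m(t)\rangle$ from below (equivalently bound the mixed normal-cone term from above). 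The monotonicity hypothesis~\eqref{monotonicity} handles the perturbation, contributing a $k_r(t)\Vert x_n-x_m\Vert^2$ term once all iterates are confined to a fixed ball $r\B$ (guaranteed by the uniform bound $K_2$). The coefficients $\lambda_n/\mu_n$ must be shown to stay bounded: since $\lambda_n(t) = 4\sqrt{\sigma_n}+(L_C+h+\sqrt{\gamma})\mu_n+\eta_n$ and $\sqrt{\sigma_n}/\mu_n\to 0$, $\eta_n/\mu_n\to 0$, we get $\lambda_n/\mu_n$ bounded, so no blow-up. Collecting terms yields a Gronwall-type inequality
\[
\frac{\mathrm{d}}{\mathrm{d}t}\tfrac12\Vert x_n(t)-x_m(t)\Vert^2 \le \big(k_r(t)+\tfrac{C}{\rho}\big)\Vert x_n(t)-x_m(t)\Vert^2 + \varrho_{n,m}(t),
\]
where $\varrho_{n,m}$ collects the residual error terms in $\sqrt{\varepsilon_n}/\mu_n,\ \eta_n/\mu_n,\ \mu_n$ and the Hausdorff gap $L_C(\mu_n+\mu_m)$ from part~\textit{(b)}; these all vanish as $n,m\to\infty$. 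Integrating via Gronwall shows $(x_n)$ is uniformly Cauchy, giving the uniform limit $x$.

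\textbf{Identifying the limit and the inclusion.} Once uniform convergence holds, feasibility follows: part~\textit{(b)} gives $d_{C(\theta_n(t))}(x_m(t))\to 0$, and with the Lipschitz-in-time property of $C$ and $\theta_n(t)\to t$ I conclude $x(t)\in C(t)$ for all $t$. For the differential inclusion I would apply Mazur's lemma to convert $\dot{x}_n\rightharpoonup\dot{x}$ into strong convergence of convex combinations, then pass to the limit in \eqref{xn-dot-inclusion}. The normal-cone term is handled by the closedness/upper semicontinuity of $x\mapsto N(C(t);x)$ in the graph sense that prox-regularity provides (the proximal normal cone is outer-semicontinuous on $U_\rho$), while the perturbation term is handled by $(\H_1^F)$: upper semicontinuity of $F(t,\cdot)$ from $\H$ into $\H_w$ together with convexity of its values lets the weak limit of $f(t,x_n(\delta_n(t)))$ land in $F(t,x(t))$ a.e. The error ball $\tfrac{3\sqrt{\sigma_n}+\eta_n}{\mu_n}\B$ shrinks to $\{0\}$ by the step-size hypotheses, so it disappears in the limit, yielding $\dot{x}(t)\in -N(C(t);x(t))+F(t,x(t))$ a.e.

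\textbf{Uniqueness.} For the final uniqueness claim under the linear growth bound, suppose $x,y$ are two solutions. Both remain in a fixed ball by Gronwall applied to the growth condition, so \eqref{monotonicity} applies with a common $r$. Writing the inclusions for $x$ and $y$, subtracting, and pairing with $x-y$, the normal-cone contributions are controlled from below by hypomonotonicity (Proposition~\ref{prop-prox-regular}(c), now with exact projections, i.e. the clean case $\varepsilon=\eta=0$) and the perturbation by \eqref{monotonicity}, giving
\[
\frac{\mathrm{d}}{\mathrm{d}t}\tfrac12\Vert x(t)-y(t)\Vert^2 \le \big(k_r(t)+\tfrac{K_7}{\rho}\big)\Vert x(t)-y(t)\Vert^2 \quad\text{a.e.},
\]
and since $x(0)=y(0)=x_0$, Gronwall forces $x\equiv y$. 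The main obstacle throughout is the infinite-dimensional setting: I expect the delicate point to be the uniform Cauchy estimate, where one must carefully track how the approximate-projection errors propagate through the hypomonotonicity inequality and confirm they are dominated by terms tending to zero under the assumptions $\varepsilon_n/\mu_n^2\to 0$ and $\eta_n/\mu_n\to 0$, rather than relying on any compactness of $C(t)$.
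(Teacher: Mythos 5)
Your proposal follows essentially the same route as the paper's proof: a Cauchy estimate obtained by differentiating $\tfrac12\Vert x_n(t)-x_m(t)\Vert^2$, splitting the pairing through the feasible points $v_{k+1}^n, v_{j+1}^m$ from Theorem \ref{scheme}(d), invoking the hypomonotonicity supplied by $\rho$-uniform prox-regularity together with the monotonicity hypothesis \eqref{monotonicity}, closing with Gronwall, and then identifying the limit inclusion via Mazur's lemma and the upper semicontinuity of $F(t,\cdot)$. The one detail worth being explicit about is that $v_{k+1}^n$ and $v_{j+1}^m$ lie in the \emph{different} sets $C(t_{k+1}^n)$ and $C(t_{j+1}^m)$, so one must use the enlargement version of the hypomonotonicity (Proposition \ref{prop-prox-regular}) with the gap $L_C(\mu_n+\mu_m)<\rho/2$, which is exactly the Hausdorff residual you flag.
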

\begin{proof}
 Consider $m,n\in\N$ with $m\geq n$ sufficiently large such that for all $t\in [0,T]$, $d_{C(\theta_n(t))}(x_m(t))<\rho$, this can be guaranteed by Theorem \ref{scheme}. Then, $\textrm{a.e.}$ $t\in [0,T]$ 
    \begin{equation*}
        \frac{\mathrm{d}}{\mathrm{d}t}\left(\frac{1}{2}\|x_n(t)-x_m(t)\|^2\right) = \langle \dot{x}_n(t)-\dot{x}_m(t), x_n(t)-x_m(t)\rangle.
    \end{equation*}
    Let $t\in [0,T]$ where the above equality holds. Let $k, j \in\{0,1,...,n-1\}$ such that $t\in ]t_k^n,t_{k+1}^n]$ and $t\in ]t_j^m,t_{j+1}^m]$. On the one hand,  we have that
    \begin{equation}\label{bound_1}
        \begin{aligned}
            \langle \dot{x}_n(t)-\dot{x}_m(t), x_n(t)-x_m(t)\rangle  &=  \ \langle \dot{x}_n(t)-\dot{x}_m(t),x_n(t)-x_{k+1}^n\rangle\\
            &+ \langle \dot{x}_n(t)-\dot{x}_m(t),x_{k+1}^n-v_{k+1}^n\rangle\\
            & + \langle \dot{x}_n(t)-\dot{x}_m(t),v_{k+1}^n -v_{j+1}^m\rangle\\
            &+\langle \dot{x}_n(t)-\dot{x}_m(t), v_{j+1}^m- x_{j+1}^m \rangle \\
            &+ \langle\dot{x}_n(t)-\dot{x}_m(t),x_{j+1}^m- x_m(t)\rangle\\
            &\leq    2K_7K_5(\mu_n +\mu_m) +4K_7(\sqrt{\varepsilon_n} + \sqrt{\varepsilon_m} + \sqrt{\sigma_n} + \sqrt{\sigma_n}) \\
         &+ 6K_7 (\eta_n + \eta_m)+\langle \dot{x}_n(t)-\dot{x}_m(t),v_{k+1}^n -v_{j+1}^m\rangle ,
        \end{aligned}
    \end{equation}
where $v_{k+1}^n\in C(t_{k+1}^n)$ and $v_{j+1}^m\in C(t_{j+1}^m)$ are the given in Theorem \ref{scheme}. We can see that 
\begin{equation*}
\begin{aligned}
\max\{d_{C(t_{k+1}^n)}(v_{j+1}^m),d_{C(t_{j+1}^m)}(v_{k+1}^n)\} &\leq \operatorname{Haus}(C(t_{j+1}^m),C(t_{k+1}^n))\\
&\leq L_C|t_{j+1}^m-t_{k+1}^n|\leq L_C(\mu_n+\mu_m).        
\end{aligned}
\end{equation*}
From now, $m,n\in\N$ are big enough such that $L_C(\mu_n+\mu_m)<\frac{\rho}{2}$. Moreover, as $h$ is $L_h$-Lipschitz, we have that for all $p\in\N$, $i\in \{0,1,...,p\}$ and $t\in[0,T]$
$$\|f(t,x_i^p)\|\leq h(x_i^p)+\sqrt{\gamma}\leq h(x_0) + L_hK_1+\sqrt{\gamma}=:\alpha.$$
From the other hand, using \eqref{xn-dot-inclusion} and Proposition \ref{prop-prox-regular} we have that 
\begin{equation*}
\begin{aligned}
        &\frac{1}{\Gamma}\max\{\left\langle \zeta_n-\dot{x}_n(t),v_{j+1}^m-v_{k+1}^n\right\rangle, \left\langle \zeta_m-\dot{x}_m(t),v_{k+1}^n-v_{j+1}^m\right\rangle\}\\
        \leq & \frac{2}{\rho}\|v_{k+1}^n-v_{j+1}^m\|^2+L_C(\mu_n+\mu_m) ,
\end{aligned}
\end{equation*}
where $\xi_n,\xi_m\in\mathbb{B}$, $\Gamma:=\sup\{\frac{\lambda_\ell(t)}{\mu_\ell}:t\in [0,T],\ell\in\N\}$ and $\zeta_i := f(t,x_i(\delta_i(t)))+\frac{3\sqrt{\sigma_i} + \eta_i}{\mu_i}\xi_i$ for $i\in \{n,m\}$. Therefore, we  have that
\begin{equation*}
\begin{aligned}
&\langle \dot{x}_n(t)-\dot{x}_m(t),v_{k+1}^n -v_{j+1}^m\rangle\\
&\hspace{17mm}=   \langle \dot{x}_n(t)-\zeta_n,v_{k+1}^n -v_{j+1}^m\rangle+\langle  \zeta_n-\zeta_m,v_{k+1}^n -v_{j+1}^m\rangle \\
&\hspace{17mm}+ \langle  \zeta_m-\dot x_m(t),v_{k+1}^n -v_{j+1}^m\rangle \\
&\hspace{17mm} \leq   2\Gamma(\frac{2}{\rho}\|v_{k+1}^n-v_{j+1}^m\|^2 + L_C(\mu_n+\mu_m))+\langle  \zeta_n-\zeta_m,v_{k+1}^n -v_{j+1}^m\rangle\\
&\hspace{17mm} \leq  \frac{4\Gamma}{\rho}(\|x_n(t)-x_m(t)\| + 2(\sqrt{\varepsilon_n}+\sqrt{\varepsilon_m} + \sqrt{\sigma_n} + \sqrt{\sigma_m}) + 3(\eta_n + \eta_m) + K_5(\mu_n+\mu_m))^2 \\
&\hspace{17mm}+ 2\Gamma L_C(\mu_n+\mu_m)
+\langle  \zeta_n-\zeta_m,v_{k+1}^n -v_{j+1}^m\rangle.
\end{aligned}
\end{equation*}
Moreover, using Theorem \ref{scheme} and property \eqref{monotonicity},
\begin{equation*}
\begin{aligned}
&  \langle  \zeta_n-\zeta_m,v_{k+1}^n -v_{j+1}^m\rangle\\
&\hspace{21mm} =   \langle  f(t,x_n(\delta_n(t)))-f(t,x_m(\delta_m(t))),x_n(\delta_n(t))-x_m(\delta_m(t))\rangle\\
&\hspace{21mm} + \langle  f(t,x_n(\delta_n(t)))-f(t,x_m(\delta_m(t))),v_{k+1}^n-x_{k+1}^n\rangle\\
&\hspace{21mm}+ \langle  f(t,x_n(\delta_n(t)))-f(t,x_m(\delta_m(t))),x_{k+1}^n-x_k^n\rangle\\
&\hspace{21mm} + \langle  f(t,x_n(\delta_n(t)))-f(t,x_m(\delta_m(t))),x_j^m-x_{j+1}^m\rangle \\
&\hspace{21mm}+ \langle  f(t,x_n(\delta_n(t)))-f(t,x_m(\delta_m(t))),x_{j+1}^m-v_{j+1}^m\rangle \\
&\hspace{21mm} + \frac{3\sqrt{\sigma_n} + \eta_n}{\mu_n}\langle\xi_n,v_{k+1}^n-v_{j+1}^m\rangle + \frac{3\sqrt{\sigma_m} + \eta_m}{\mu_m}\langle\xi_m,v_{j+1}^m-v_{k+1}^n\rangle\\
&\hspace{21mm}\leq  \ k(t) \Vert x_n(\delta_n(t))-x_m(\delta_m(t)) \Vert^2\\
&\hspace{21mm} + 2\alpha(2(\sqrt{\sigma_n} + \sqrt{\sigma_m}) + \sqrt{\varepsilon_n} + \sqrt{\varepsilon_m} + 2(\eta_n + \eta_m)+K_4(\mu_n+ \mu_m)) \\
&\hspace{21mm} + \frac{3\sqrt{\sigma_n} + \eta_n}{\mu_n}\|v_{k+1}^n-v_{j+1}^m\| + \frac{3\sqrt{\sigma_m} + \eta_m}{\mu_m}\|v_{j+1}^m-v_{k+1}^n\|\\
&\hspace{21mm}\leq   k(t)(\Vert x_n(t)-x_m(t)\Vert + 3(\sqrt{\varepsilon_n}+\sqrt{\varepsilon_m}) + 3(\eta_n + \eta_m)+(K_4+K_5)(\mu_n+\mu_m))^2\\
&\hspace{21mm} + 2\alpha(2(\sqrt{\sigma_n} + \sqrt{\sigma_m}) + \sqrt{\varepsilon_n} + \sqrt{\varepsilon_m} + 2(\eta_n + \eta_m)+K_4(\mu_n+ \mu_m)) \\
&\hspace{21mm} + \left(\frac{3\sqrt{\sigma_n} + \eta_n}{\mu_n}+\frac{3\sqrt{\sigma_m} + \eta_m}{\mu_m}\right)(2(\sqrt{\sigma_n} + \sqrt{\sigma_m}) + \eta_n + \eta_m +2K_2).
\end{aligned}
\end{equation*}
\noindent These two inequalities and \eqref{bound_1} yield 
\begin{equation*}
    \begin{aligned}
        &\frac{\mathrm{d}}{\mathrm{d}t}\Vert x_n(t)-x_m(t)\Vert^2\\
        &\hspace{10mm} \leq  4\left(\frac{4\Gamma}{\rho}+k(t)\right)\Vert x_n(t)-x_m(t)\Vert^2\\
        &\hspace{10mm} + 4K_7(K_5(\mu_n +\mu_m) +2(\sqrt{\varepsilon_n} + \sqrt{\varepsilon_m} + \sqrt{\sigma_n} + \sqrt{\sigma_n}) + 3 (\eta_n + \eta_m)) +4\Gamma L_C(\mu_n+\mu_m)\\
        &\hspace{10mm} + 4\alpha(2(\sqrt{\sigma_n} + \sqrt{\sigma_m}) + \sqrt{\varepsilon_n} + \sqrt{\varepsilon_m} + 2(\eta_n + \eta_m)+K_4(\mu_n+ \mu_m))\\
        &\hspace{10mm}+2\left(\frac{3\sqrt{\sigma_n} + \eta_n}{\mu_n}+\frac{3\sqrt{\sigma_m} + \eta_m}{\mu_m}\right)(2(\sqrt{\sigma_n}+\sqrt{\sigma_m}) + \eta_n + \eta_m + 2K_2)\\
        &\hspace{10mm}+\frac{16\Gamma}{\rho}(2(\sqrt{\varepsilon_n} + \sqrt{\varepsilon_m} + \sqrt{\sigma_n} + \sqrt{\sigma_m}) + 3(\eta_n + \eta_m) + K_5(\mu_n+\mu_m))^2\\
        &\hspace{10mm}+4k(t)\left(3(\sqrt{\varepsilon_n}+\sqrt{\varepsilon_m})+ 3(\eta_n + \eta_m) + (K_4+K_5)(\mu_n+\mu_m)\right)^2.
    \end{aligned}
\end{equation*} 
Hence, using Gronwall's inequality, we have for all $t\in [0,T]$ and $n,m$ big enough:
\begin{equation*}
    \Vert x_n(t)-x_m(t) \Vert^2\leq A_{m,n}\exp\left(\frac{16\Gamma}{\rho}T+4\int_0^T k(s)\mathrm{d}s\right),
\end{equation*}
where,
\begin{equation*}
    \begin{aligned}
        A_{m,n} &=\ 4\alpha T(2(\sqrt{\sigma_n} + \sqrt{\sigma_m}) + \sqrt{\varepsilon_n} + \sqrt{\varepsilon_m} + 2(\eta_n + \eta_m)+K_4(\mu_n+ \mu_m))\\
        & + 4TK_7(K_5(\mu_n +\mu_m) +2(\sqrt{\varepsilon_n} + \sqrt{\varepsilon_m} + \sqrt{\sigma_n} + \sqrt{\sigma_n}) + 3 (\eta_n + \eta_m)) \\
        &+4T\Gamma L_C(\mu_n+\mu_m)+2T\left(\frac{3\sqrt{\sigma_n} + \eta_n}{\mu_n}+\frac{3\sqrt{\sigma_m} + \eta_m}{\mu_m}\right)(2(\sqrt{\sigma_n}+\sqrt{\sigma_m}) + \eta_n + \eta_m + 2K_2)\\
        &+\frac{16T\Gamma}{\rho}(2(\sqrt{\varepsilon_n} + \sqrt{\varepsilon_m} + \sqrt{\sigma_n} + \sqrt{\sigma_m}) + 3(\eta_n + \eta_m) + K_5(\mu_n+\mu_m))^2\\
        &+4\Vert k \Vert_1(3(\sqrt{\varepsilon_n}+\sqrt{\varepsilon_m})+ 3(\eta_n + \eta_m) + (K_4+K_5)(\mu_n+\mu_m))^2.
    \end{aligned}
\end{equation*}
Since $A_{m,n}$ goes to $0$ when $m,n\to\infty$, it shows that $(x_n)$ is a Cauchy sequence in the space of continuous functions with the uniform convergence. Therefore, it converges uniformly to some continuous function $x\colon [0,T]\to \H$. It remains to check that $x$ is absolutely continuous, and it is the unique solution of \eqref{SP}. First of all, by Theorem \ref{scheme} and \cite[Lemma 2.2]{MR3626639}, $x$ is absolutely continuous and there is a subsequence of $(\dot{x}_{n})$ which converges weakly in $L^1([0,T];\H)$ to $\dot{x}$. So, without relabeling, we have $\dot{x}_n\rightharpoonup \dot{x}$ in $L^1([0,T];\H)$. On the other hand,  using Theorem \ref{scheme} and defining $v_n(t) := v_{k+1}^n$ for $t\in ]t_k^n,t_{k+1}^n]$ we have
\begin{equation*}
    \begin{aligned}
        \dot{x}_n(t) &\in -\frac{\lambda_n(t)}{\mu_n}\partial_P d_{C(\theta_n(t))}(v_n(t)) + f(t,x_n(\delta_n(t))) +\frac{3\sqrt{\sigma_n} + \eta_n}{\mu_n}\mathbb{B}\\
        &\in -\kappa_1\partial d_{C(\theta_n(t))}(v_n(t)) + \kappa_2\mathbb{B}\cap F(t,x_n(\delta_n(t))) + \frac{3\sqrt{\sigma_n} + \eta_n}{\mu_n}\mathbb{B},
    \end{aligned}
\end{equation*}
where, by Theorem \ref{scheme},  $\kappa_1$ and $\kappa_2$ are nonnegative numbers which do not depend of $n\in\N$ and $t\in [0,T]$. We also have $v_n\to x$, $\theta_n\to \operatorname{Id}_{[0,T]}$ and $\delta_n\to \operatorname{Id}_{[0,T]}$ uniformly. Theorem \ref{scheme} ensures that $x(t)\in C(t)$ for all $t\in [0,T]$. By Mazur's Lemma, there is a sequence $(y_j)$ such that for all $n$, $y_n\in \operatorname{co}(\dot{x}_k:k\geq n)$ and $(y_n)$ converges strongly to $\dot{x}$ in $L^1([0,T];\H)$. That is to say 
$$y_n(t)\in \operatorname{co}\left( -\kappa_1\partial d_{C(\theta_k(t))}(v_k(t)) + \kappa_2\mathbb{B}\cap F(t,x_k(\delta_k(t))) +\frac{3\sqrt{\sigma_k} + \eta_k}{\mu_k}\mathbb{B} :k\geq n\right).$$
Hence, there exists $(y_{n_j})$ which converges to $\dot{x}$ almost everywhere in $[0,T]$. Then, by virtue of \cite[Lemma~2]{MR4822735}, $\left(\H_3^F\right)$ and \cite[Lemma~3]{MR4822735}, we obtain that $$\dot{x}(t)\in -\kappa_1\partial d_{C(t)}(x(t)) +\kappa_2\mathbb{B}\cap F(t,x(t)) \textrm{ for a.e. } t\in [0,T].$$
Since $\partial d_{C(t)}(x(t))\subset N(C(t);x(t))$ for all $t\in[0,T]$, we obtain that $x$ is the solution of \eqref{SP}. Finally, the uniqueness of solutions can be derived following an argument similar to that of \cite[Theorem~2]{MR4822735}.
\end{proof}
\begin{remark}
It is worth emphasizing that property \eqref{monotonicity} is a classical monotonicity assumption in the theory of the existence of solutions for differential inclusions (see, e.g., \cite[Theorem 10.5]{MR1189795}).
\end{remark}
\begin{remark}[Rate of convergence]
In the precedent proof, we have established the following estimation:
$$\Vert x_n(t) - x_m(t) \Vert^{2} \leq A_{m,n} \exp \left(\frac{16 \Gamma}{\rho}T + 4 \int_{0}^{T} k(s) \mathrm{d}s \right)$$
for $m,n$ such that $\mu_n + \mu_m \leq \frac{\rho}{2 L_C}$. Hence, by letting $m \to + \infty$, we obtain that 
$$
\Vert x_n(t) - x(t) \Vert^{2} \leq A_n \exp \left(\frac{16 \Gamma}{\rho}T + 4 \int_{0}^{T} k(s) \mathrm{d}s \right) \text{ for all }n > \frac{2 L_C T}{\rho},
$$ 
where
$$
A_n:= \lim_{m \to \infty} A_{m,n} \leq D \left(\sqrt{\varepsilon_n} + \eta_n + \mu_n + \frac{\sqrt{\varepsilon_n}}{\mu_n} + \frac{\eta_n}{\mu_n} + \sqrt{\eta_n \mu_n}\right) ,
$$
where $D>0$. Hence, the above estimation provides a rate of convergence for our scheme.
\end{remark}

\section{The Case of Uniformly Subsmooth Moving Sets}\label{Sec-5}

In this section, we prove the convergence of the inexact catching-up algorithm when the moving sets are uniformly subsmooth. Our results extend the convergence analysis carried out  \cite{MR3574145,MR4822735}. In contrast to the prox-regular case, uniform subsmoothness fails to guarantee a sufficiently strong monotonicity of the normal cone necessary to ensure the existence and uniqueness of solutions. Consequently, in the subsequent analysis, we shall assume that the moving sets are ball-compact.
\begin{theorem}
Suppose, in addition to assumptions of Theorem \ref{scheme}, that the family $(C(t))_{t\in [0,T]}$ is equi-uniformly subsmooth and the sets $C(t)$ are ball-compact for all $t\in [0,T]$. Then, the sequence of continuous functions $(x_n)$ generated by algorithm \eqref{piecewise_construction} and \eqref{approx_proj_step} converges uniformly (up to a subsequence) to a Lipschitz continuous solution $x(\cdot)$ of \eqref{SP}.
\end{theorem}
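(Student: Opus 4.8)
The plan is to abandon the Cauchy-sequence strategy of the prox-regular case, which rested on the hypomonotonicity of the truncated normal cone, and replace it by a compactness argument. Since equi-uniform subsmoothness only yields the weak estimate \eqref{subsmooth}, which is too soft to drive a Gronwall inequality, I would instead exploit the ball-compactness of the values $C(t)$ together with the uniform estimates of Theorem \ref{scheme} to extract a uniformly convergent subsequence by an Arzel\`a--Ascoli argument, and only afterwards pass to the limit in the differential inclusion. This is precisely why the conclusion asserts convergence only \emph{up to a subsequence}.

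First I would establish pointwise relative compactness of the approximating sequence. By the equi-Lipschitz bound $\|\dot{x}_n\|\le K_7$ of Theorem \ref{scheme}, the functions $x_n$ are equicontinuous, and by the uniform bound $\|x_n(t)\|\le K_2$ they are uniformly bounded. Taking $m=n$ in part (b) of Theorem \ref{scheme} gives $d_{C(\theta_n(t))}(x_n(t))\le (K_6+L_C)\mu_n+2\sqrt{\varepsilon_n}+3\eta_n$, and combining this with the Hausdorff Lipschitz continuity \eqref{lip_movingset} yields $d_{C(t)}(x_n(t))\le d_{C(\theta_n(t))}(x_n(t))+L_C\mu_n\to 0$ uniformly in $t$. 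Hence, for each fixed $t$, ball-compactness lets me choose $c_n(t)\in C(t)\cap(K_2+1)\B$, a compact set, with $\|x_n(t)-c_n(t)\|=d_{C(t)}(x_n(t))\to 0$, so $\{c_n(t)\}_n$ and therefore $\{x_n(t)\}_n$ are relatively compact in $\H$. Equicontinuity together with pointwise relative compactness allows me to invoke Arzel\`a--Ascoli and extract a subsequence (not relabeled) converging uniformly to a Lipschitz function $x\colon[0,T]\to\H$. Passing to the limit in $d_{C(t)}(x_n(t))\to 0$ and using that $C(t)$ is closed gives $x(t)\in C(t)$ for every $t$, in particular $x(0)=x_0\in C(0)$.

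Next I would identify the limit dynamics. Since $\|\dot{x}_n\|\le K_7$, the derivatives are bounded in $L^\infty([0,T];\H)$, so along a further subsequence $\dot{x}_n\rightharpoonup\dot{x}$ weakly in $L^1([0,T];\H)$. Writing $v_n(t):=v_{k+1}^n$ for $t\in\,]t_k^n,t_{k+1}^n]$, the inclusion \eqref{xn-dot-inclusion} can be recast, exactly as in the prox-regular proof, as
\begin{equation*}
\dot{x}_n(t)\in -\kappa_1\,\partial d_{C(\theta_n(t))}(v_n(t)) + \kappa_2\B\cap F(t,x_n(\delta_n(t))) + \frac{3\sqrt{\sigma_n}+\eta_n}{\mu_n}\B,
\end{equation*}
where $\kappa_1,\kappa_2$ are independent of $n$ and the residual radius $(3\sqrt{\sigma_n}+\eta_n)/\mu_n\to 0$, owing to $\varepsilon_n/\mu_n^2\to 0$ and $\eta_n/\mu_n\to 0$. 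The last estimate of Theorem \ref{scheme}(d) gives $\|v_n(t)-x_n(\theta_n(t))\|<2\sqrt{\sigma_n}+\eta_n\to 0$, whence $v_n\to x$ uniformly, while $\theta_n\to\operatorname{Id}$ and $\delta_n\to\operatorname{Id}$ uniformly. Applying Mazur's lemma to $\dot{x}_n\rightharpoonup\dot{x}$ produces convex combinations converging strongly in $L^1$, hence a.e. along a subsequence, which confines $\dot{x}(t)$ to the limiting set for a.e. $t$.

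The hard part is the final passage to the limit in the normal-cone term, where the hypomonotonicity device is no longer available. Here I would use equi-uniform subsmoothness to supply the missing closedness: for subsmooth sets the proximal, Fr\'echet, limiting and Clarke normal cones coincide, and equi-uniform subsmoothness combined with the Hausdorff Lipschitz continuity of $t\mapsto C(t)$ guarantees that the truncated normal-cone map enjoys the graph-closedness needed to conclude that the weak limit of $\xi_n\in\partial d_{C(\theta_n(t))}(v_n(t))$, with $v_n(t)\to x(t)$ and $\theta_n(t)\to t$, lies in $N(C(t);x(t))$. This is the subsmooth analogue of the closure lemmas \cite[Lemma~2]{MR4822735} and \cite[Lemma~3]{MR4822735} used in the prox-regular argument, and I expect it to be the principal technical obstacle. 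Combining this closure with the a.e. convergence from Mazur's lemma yields $\dot{x}(t)\in -\kappa_1\,\partial d_{C(t)}(x(t)) + \kappa_2\B\cap F(t,x(t))$ for a.e. $t$; since $\partial d_{C(t)}(x(t))\subset N(C(t);x(t))$ and $F$ is upper semicontinuous with nonempty closed convex values, this shows that $x$ is a Lipschitz solution of \eqref{SP}, completing the proof.
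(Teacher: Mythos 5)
Your proposal is correct and follows essentially the same route as the paper: pointwise relative compactness from ball-compactness plus the distance estimates of Theorem \ref{scheme}, Arzel\`a--Ascoli, weak $L^1$ convergence of the derivatives, Mazur's lemma, and a closure lemma for $\partial d_{C(\cdot)}(\cdot)$ to pass to the limit in the inclusion. The one step you flag as the ``principal technical obstacle'' is handled in the paper exactly as you anticipate, by invoking the closure lemmas \cite[Lemma~2, Lemma~3]{MR4822735} together with $(\H_1^F)$.
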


\begin{proof}
    From Theorem \ref{scheme} we have for all $n\in\N$ and $k\in \{0,\ldots,n-1\}$, there is $v_{k+1}^n\in C(t_{k+1}^n)$ such that $\|v_{k+1}^n-x_{k+1}^n\|<2\sqrt{\sigma_n} + \eta_n$ and for all $t\in ]t_k^n,t_{k+1}^n]$:
\begin{equation*}
    \dot{x}_n(t)\in -\frac{\lambda_n(t)}{\mu_n}\partial_P d_{C(\theta_n(t))}(v_{k+1}^n) + f(t,x_n(\delta_n(t))) +\frac{3\sqrt{\sigma_n} + \eta_n}{\mu_n}\mathbb{B}.
\end{equation*}
where $\lambda_n(t) = 4\sqrt{\sigma_n} + (L_C + h(x(\delta_n(t)))+\sqrt{\gamma})\mu_n + \eta_n$. We define $\nu := \sup_{n \in \N} \frac{4\sqrt{\sigma_n} + \eta_n}{\mu_n}$. As $h$ is $L_h$-Lipschitz it follows that $$\lambda_n(t)\leq (\nu+L_C+h(x_0)+\sqrt{\gamma} + L_hK_1)\mu_n.$$ Defining $v_n(t):=v_{k+1}^n$ on $]t_k^n,t_{k+1}^n]$, then for all $n\in\N$ and almost all $t\in [0,T]$
\begin{equation*}
    \begin{aligned}
        \dot{x}_n(t) &\in -M\partial_P d_{C(\theta_n(t))}(v_n(t)) + f(t,x_n(\delta_n(t))) +\frac{3\sqrt{\sigma_n} + \eta_n}{\mu_n}\mathbb{B}\\
        &\in -M\partial d_{C(\theta_n(t))}(v_n(t)) + M\mathbb{B}\cap F(t,x_n(\delta_n(t))) +\frac{3\sqrt{\sigma_n} + \eta_n}{\mu_n}\mathbb{B}.     
    \end{aligned}
\end{equation*}
where $M:= \nu+L_C+h(x_0) + L_hK_1+\sqrt{\gamma}$. Moreover, by Theorem \ref{scheme},  we have for all $t\in [0,T]$
\begin{equation}\label{boundsubsmooth1}
\begin{aligned}
d_{C(t)}(x_n(t))\leq d_{C(\theta_n(t))}(x_n(t)) + L_C\mu_n\leq   (K_6+2L_C)\mu_n+2\sqrt{\varepsilon_n} + 3\eta_n.
\end{aligned}
\end{equation}
Next, fix $t\in [0,T]$ and define $K(t):=\{x_n(t) : n\in \N\}$. We claim that $K(t)$ is relatively compact. Indeed, let $x_m(t) \in K(t)$ and take $y_m(t)\in \Proj_{C(t)}(x_m(t))$ (the projection exists due to the ball compactness of $C(t)$ and the boundedness of $K(t)$). Moreover, according to \eqref{boundsubsmooth1} and Theorem \ref{scheme},
\begin{equation*}
\begin{aligned}
\Vert y_n(t)\Vert & \leq d_{C(t)}(x_n(t)) + \Vert x_n(t)\Vert \leq  (K_6 + 2L_C)\mu_n + 2\sqrt{\varepsilon_n}+ 3 \eta_n + K_2.
\end{aligned}
\end{equation*}
This entails that $y_n(t)\in C(t)\cap R\, \mathbb{B}$ for all $n\in\mathbb{N}$ for some $R>0$. Thus, by the ball compactness of $C(t)$, there exists a subsequence $(y_{m_k}(t))$ of $(y_m(t))$ converging to some $y(t)$ as $k\to +\infty$. Then, 
\begin{equation*}
\begin{aligned}
\Vert x_{m_k}(t)-y(t)\Vert &\leq d_{C(t)}(x_{m_k}(t))+\Vert y_{m_k}(t)-y(t)\Vert \\
&\leq (K_6 + 2L_C)\mu_{m_k} + 2\sqrt{\varepsilon_{m_k}} + 3\eta_n + \Vert y_{m_k}(t)-y(t) \Vert,
\end{aligned}
\end{equation*}
which implies that $K(t)$ is relatively compact. Moreover, by Theorem \ref{scheme} that $K:=(x_n)$ is equicontinuous.  Therefore, by virtue of Theorem \ref{scheme}, Arzela-Ascoli's and \cite[Lemma~2.2]{MR3626639}, we obtain the existence of a Lipschitz function $x(\cdot)$ and a subsequence $(x_j)$ of $(x_n)$ such that
\begin{enumerate}[label=(\roman{*})]
\item $(x_j)$ converges uniformly to $x$ on $[0,T]$.
\item  $\dot{x}_j\rightharpoonup \dot{x}$ in $L^1\left([0,T];\H\right)$.
\item $x_j(\theta_j(t))\to x(t)$ for all $t\in [0,T]$.
\item $x_j(\delta_j(t))\to x(t)$ for all $t\in [0,T]$.
\item $v_j(t)\to x(t)$ for all $t\in [0,T]$.
\end{enumerate}
From \eqref{boundsubsmooth1} it is clear that $x(t)\in C(t)$ for all $t\in [0,T]$. By Mazur's Lemma, there is a sequence $(y_j)$ such that for all $j$, $y_j\in \text{co}(\dot{x}_k:k\geq j)$ and $(y_j)$ converges strongly to $\dot{x}$ in $L^1([0,T];\H)$. That is $$y_j(t)\in \operatorname{co}\left( -M\partial d_{C(\theta_n(t))}(v_n(t)) + M\mathbb{B}\cap F(t,x_n(\delta_n(t))) +\frac{3\sqrt{\sigma_n} + \eta_n}{\mu_n}\mathbb{B} :n\geq j\right).$$ On the other hand, there exists $(y_{n_j})$ which converges to $\dot{x}$ almost everywhere in $[0,T]$. Then, using \cite[Lemma~2]{MR4822735}, \cite[Lemma~3]{MR4822735} and $(\H_{1}^{F})$, we have $$\dot{x}(t)\in -M\partial d_{C(t)}(x(t)) +M\mathbb{B}\cap F(t,x(t))  \textrm{ a.e. }$$
Finally, since $\partial d_{C(t)}(x(t))\subset N(C(t);x(t))$ for all $t\in [0,T]$, it follows that $x$ solves \eqref{SP}.
\end{proof}

\section{The Case of a Fixed Set}\label{Sec-6}
In this section, we prove the convergence of the inexact catching-up algorithm for the sweeping process driven by a fixed set:
\begin{equation}\label{fixed-set-SP}
\left\{
\begin{aligned}
\dot{x}(t)&\in -N\left(C;x(t)\right)+F(t,x(t)) & \textrm{ a.e. } t\in [0,T],\\
x(0)&=x_0\in C,
\end{aligned}
\right.
\end{equation}
where $C\subset \H$ and $F\colon [0,T]\times \H \rightrightarrows \H$ is a set-valued map defined as above. It is worth emphasizing that the above dynamical system is strongly related to the concept of a projected dynamical system (see, e.g., \cite{MR2185604}). Our results extend the convergence analysis carried out in the classical and inner approximate cases (see  \cite{MR3956966,MR4822735}). It is worth to emphasizing that, in this case, no regularity of the set $C$ is required.
\begin{theorem}
Let $C\subset \H$ be a ball-compact set and $F\colon [0,T]\times \H\rightrightarrows \H$ be a set-valued map satisfying $(\H_1^F), (\H_1^2)$ and $(\H_3^F)$. Then, the sequence of functions $(x_n)$ generated by the algorithm \eqref{approx_proj_step} converges uniformly (up to a subsequence) to a  Lipschitz solution  $x(\cdot)$ of  \eqref{fixed-set-SP} such that
\begin{equation*}
\begin{aligned}
\Vert \dot{x}(t)\Vert  & \leq  2(h(x(t))+\sqrt{\gamma}) & \textrm{ a.e. } t\in [0,T].
\end{aligned}
\end{equation*}

\end{theorem}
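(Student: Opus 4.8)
The plan is to follow the same overall strategy as in the subsmooth case, exploiting the fact that a fixed set trivially satisfies the Lipschitz-Hausdorff condition \eqref{lip_movingset} with $L_C=0$, so that all conclusions of Theorem \ref{scheme} are at our disposal. First I would record the relevant specializations: since $C(\theta_n(t))\equiv C$, part (d) of Theorem \ref{scheme} gives, on each $]t_k^n,t_{k+1}^n[$, an inclusion of the form
\begin{equation*}
\dot{x}_n(t)\in -M\partial d_{C}(v_n(t)) + M\B\cap F(t,x_n(\delta_n(t))) + \frac{3\sqrt{\sigma_n}+\eta_n}{\mu_n}\B,
\end{equation*}
where $v_n(t):=v_{k+1}^n\in C$ satisfies $\|v_n(t)-x_n(\theta_n(t))\|<2\sqrt{\sigma_n}+\eta_n$ and $M$ is a constant independent of $n$; part (c) yields a uniform bound $\|\dot x_n(t)\|\le K_7$, hence equicontinuity of $(x_n)$; and part (b) with $m=n$ and $L_C=0$ gives $d_C(x_n(t))\le K_6\mu_n+2\sqrt{\varepsilon_n}+3\eta_n\to 0$ uniformly in $t$.

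The next step is to establish pointwise relative compactness. Fix $t$ and pick $y_n(t)\in\Proj_C(x_n(t))$, which exists because $\{x_n(t)\}$ is bounded and $C$ is ball-compact. Since $\|y_n(t)\|\le d_C(x_n(t))+\|x_n(t)\|$ is bounded, the points $y_n(t)$ lie in a fixed compact slice $C\cap R\B$, so a subsequence converges; as $\|x_n(t)-y_n(t)\|=d_C(x_n(t))\to 0$, the corresponding subsequence of $(x_n(t))$ converges as well. Combined with equicontinuity, Arzela--Ascoli and \cite[Lemma~2.2]{MR3626639} furnish a subsequence (not relabeled) converging uniformly to a Lipschitz function $x(\cdot)$ with $\dot{x}_n\rightharpoonup\dot{x}$ in $L^1([0,T];\H)$, together with $x_n(\delta_n(t)),x_n(\theta_n(t)),v_n(t)\to x(t)$ for every $t$; the estimate $d_C(x_n(t))\to 0$ then gives $x(t)\in C$ for all $t$.

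I would then pass to the limit in the differential inclusion exactly as in the subsmooth proof. By Mazur's Lemma there are convex combinations of the $\dot{x}_k$ converging strongly, hence (along a further subsequence) almost everywhere, to $\dot{x}$; invoking the closure results \cite[Lemma~2]{MR4822735} and \cite[Lemma~3]{MR4822735}, the upper semicontinuity of the Clarke subdifferential $\partial d_C$ from $\H$ into $\H_w$ (available here with no regularity on $C$, since $d_C$ is globally $1$-Lipschitz and $C$ is fixed), and $\left(\H_1^F\right)$, I obtain
\begin{equation*}
\dot{x}(t)\in -M\partial d_C(x(t)) + M\B\cap F(t,x(t))\quad\text{a.e. }t\in[0,T].
\end{equation*}
Since $\partial d_C(x(t))\subset N(C;x(t))$, this yields $\dot{x}(t)\in -N(C;x(t))+F(t,x(t))$, so $x$ solves \eqref{fixed-set-SP}. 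For the velocity bound I would refine part (c): writing $\dot{x}_n(t)=\mu_n^{-1}(x_{k+1}^n-\tau_n(t))+f(t,x_k^n)$ and using $d_C(\tau_n(t))\le \eta_n+(h(x_k^n)+\sqrt{\gamma})\mu_n$ together with $\|f(t,x_k^n)\|\le h(x_k^n)+\sqrt{\gamma}$ gives
\begin{equation*}
\|\dot{x}_n(t)\|\le \frac{\eta_n+\sqrt{\varepsilon_n}}{\mu_n}+2\bigl(h(x_n(\delta_n(t)))+\sqrt{\gamma}\bigr);
\end{equation*}
taking the same convex combinations as above, using their a.e.\ convergence, the hypotheses $\sqrt{\varepsilon_n}/\mu_n,\eta_n/\mu_n\to 0$, and the continuity of $h$ with $x_n(\delta_n(t))\to x(t)$, I recover $\|\dot{x}(t)\|\le 2(h(x(t))+\sqrt{\gamma})$ a.e.

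I expect the main obstacle to be the passage to the limit in the normal-cone term: one must verify that the weak $L^1$ limit $\dot x$ lands in $-M\partial d_C(x(t))+M\B\cap F(t,x(t))$ despite having only weak convergence of the velocities and only approximate feasibility (one has $v_n(t)\in C$, but $x_n(t)\notin C$ in general). The Mazur convexification step, combined with the graph-closedness of the fixed Clarke subdifferential $\partial d_C$ in $\H\times\H_w$ and the upper semicontinuity of $F$, is precisely what makes this identification work, and is where all the care is required.
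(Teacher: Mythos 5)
Your proposal is correct and follows essentially the same route as the paper: Theorem \ref{scheme} with $L_C=0$, ball-compactness plus Arzela--Ascoli for compactness, and Mazur's Lemma with the closure lemmas of \cite{MR4822735} to pass to the limit in the inclusion. The only (harmless) deviation is the velocity bound: the paper keeps the coefficient $\lambda_n(t)/\mu_n\to h(x(t))+\sqrt{\gamma}$ in the differential inclusion and reads the bound off the limiting inclusion $\dot{x}(t)\in-(h(x(t))+\sqrt{\gamma})\partial d_C(x(t))+(h(x(t))+\sqrt{\gamma})\B\cap F(t,x(t))$, whereas you bound the discrete velocities directly and pass to the limit through the convex combinations; both arguments are valid.
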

\begin{proof} We are going to use the properties of Theorem \ref{scheme}, where now we have $L_C = 0$. First of all, from Theorem \ref{scheme} we have for all $n\in\N$ and $k\in \{0,1,\ldots,n-1\}$, there is $v_{k+1}^n\in C$ such that $\|v_{k+1}^n-x_{k+1}^n\|<2\sqrt{\sigma_n} + \eta_n$ and for all $t\in ]t_k^n,t_{k+1}^n]$:
\begin{equation*}
    \dot{x}_n(t)\in -\frac{\lambda_n(t)}{\mu_n}\partial_P d_{C}(v_{k+1}^n) + f(t,x_n(\delta_n(t))) +\frac{3\sqrt{\sigma_n}+\eta_n}{\mu_n}\mathbb{B},
\end{equation*}
where $\lambda_n(t) = 4\sqrt{\sigma_n} + \eta_n + (h(x(\delta_n(t)))+\sqrt{\gamma})\mu_n$. Defining $v_n(t):=v_{k+1}^n$ on $]t_k^n,t_{k+1}^n]$, we get that for all $n\in\N$ and \textrm{a.e.} $t\in [0,T]$
\begin{equation*}
    \begin{aligned}
        \dot{x}_n(t) &\in  -\frac{\lambda_n(t)}{\mu_n}\partial_P d_{C}(v_n(t)) + f(t,x_n(\delta_n(t))) +\frac{3\sqrt{\sigma_n}+\eta_n}{\mu_n}\mathbb{B}\\
        &\in  -\frac{\lambda_n(t)}{\mu_n}\partial d_{C}(v_n(t))+ (h(t,x_n(\delta_n(t)))+\sqrt{\gamma})\mathbb{B}\cap F(t,x_n(\delta_n(t))) +\frac{3\sqrt{\sigma_n}+\eta_n}{\mu_n}\mathbb{B}.   
    \end{aligned}
\end{equation*}
Moreover, by Theorem \ref{scheme},  we have 
\begin{equation*}
d_C(x_n(t))\leq  K_6\mu_n+2\sqrt{\varepsilon_n} + 3 \eta_n \textrm{ for all } t\in [0,T].
\end{equation*}
Next, fix $t\in [0,T]$ and define $K(t):=\{x_n(t) : n\in \mathbb{N}\}$. We claim that $K(t)$ is relatively compact. Indeed, let $x_m(t) \in K(t)$ and take $y_m(t)\in \operatorname{Proj}_C(x_m(t))$ (the projection exists due to the ball compactness of $C$ and the boundedness of $K(t)$). Moreover, according to the above inequality and Theorem \ref{scheme},
\begin{equation*}
\Vert y_n(t)\Vert  \leq d_C(x_n(t))+\Vert x_n(t)\Vert \leq   K_6\mu_n+2\sqrt{\varepsilon_n} + 3\eta_n + K_2,
\end{equation*}
which entails that $y_n(t)\in C\cap R\, \mathbb{B}$ for all $n\in\mathbb{N}$ for some $R>0$. Thus, by the ball-compactness of $C$, there exists a subsequence $(y_{m_k}(t))$ of $(y_m(t))$ converging to some $y(t)$ as $k\to +\infty$. Then, 
\begin{equation*}
\begin{aligned}
\Vert x_{m_k}(t)-y(t)\Vert &\leq d_{C}(x_{m_k}(t))+\Vert y_{m_k}(t)-y(t)\Vert \\
&\leq K_6\mu_{m_k}+2\sqrt{\varepsilon_{m_k}}+ 3\eta_{m_k} + \Vert y_{m_k}(t)-y(t)\Vert,
\end{aligned}
\end{equation*}
which implies that $K(t)$ is relatively compact. Moreover, by Theorem \ref{scheme}, the set $K:=(x_n)$ is equicontinuous.  Therefore, by virtue of Theorem \ref{scheme}, Arzela-Ascoli's and \cite[Lemma~2.2]{MR3626639}, we obtain the existence of a Lipschitz function $x$ and a subsequence $(x_j)$ of $(x_n)$ such that
\begin{enumerate}[(i)]
\item $(x_j)$ converges uniformly to $x$ on $[0,T]$.
\item  $\dot{x}_j\rightharpoonup \dot{x}$ in $L^1\left([0,T];\H\right)$.
\item $x_j(\theta_j(t))\to x(t)$ for all $t\in [0,T]$.
\item $x_j(\delta_j(t))\to x(t)$ for all $t\in [0,T]$.
\item $v_j(t)\to x(t)$ for all $t\in [0,T]$.
\item $x(t)\in C$  for all $t\in [0,T]$.
\end{enumerate} 
By Mazur's Lemma, there is a sequence $(y_j)$ such that for all $j$, $y_j\in \text{co}(\dot{x}_k:k\geq j)$ and $(y_j)$ converges strongly to $\dot{x}$ in $L^1([0,T];\H)$. i.e., 
$$y_j(t)\in \text{co}( -\alpha_n\partial d_{C}(v_n(t)) + \beta_n\mathbb{B}\cap F(t,x_n(\delta_n(t))) +\frac{3\sqrt{\sigma_n} + \eta_n}{\mu_n}\mathbb{B} :n\geq j),
$$ 
where $\alpha_n:= \frac{4\sqrt{\sigma_n} + \eta_n}{\mu_n}+h(x_n(\delta_n(t)))+\sqrt{\gamma}$ and $\beta_n:= h(x_n(\delta_n(t))) + \sqrt{\gamma}$. On the other hand, there exists $(y_{n_j})$ which converges to $\dot{x}$ a.e. in $[0,T]$. Then, using \cite[Lemma~2]{MR4822735}, \cite[Lemma~3]{MR4822735} and $(\H_1^F)$, we have 
$$\dot{x}(t)\in -(h(x(t))+\sqrt{\gamma})\partial d_{C}(x(t)) +(h(x(t))+\sqrt{\gamma})\mathbb{B}\cap F(t,x(t))  \textrm{ for a.e. } t\in [0,T].
$$
Finally, since $\partial d_{C}(x(t))\subset N(C;x(t))$ for all $t\in [0,T]$, we obtain that  $x$ solves \eqref{fixed-set-SP}.
\end{proof}

\section{An Application to Complementarity Dynamical Systems}\label{Sec-7}

In this section, we will apply our enhanced algorithm to complementarity dynamical systems. These systems have garnered increasing attention due to their applications in fields such as mechanics, economics, transportation, control systems and electrical circuits, see e.g., \cite{MR2185604, MR2731287, MR2049810}.  Complementarity dynamical systems combine ordinary differential equations with complementarity conditions, which can, in turn, be equivalently expressed using variational inequalities or specific differential inclusions, see e.g., \cite{MR3467591, Acary-Brogliato-2008}.

Following \cite{MR2731287}, let us consider the following class of linear complementarity dynamical systems
\begin{equation}\label{CDS}
\begin{cases}
    \dot{x}(t) = Ax(t) + B \zeta(t) + E u(t) \\
    0 \leq \zeta(t) \perp w(t) = Cx(t) + D\zeta(t) + Gu(t) + F \geq 0,
\end{cases}
\end{equation}
where the matrices and vectors $A,B,C,D,E,F,G$ are constant of suitable dimensions, $x(t) \in \R^{n}$, $u(t) \in \R^{p}$, $\zeta(t) \in \R^{m}$. We consider the special case where $D=0$ and assume the existence of a symmetric, positive-definite matrix $P$ such that $PB = C^{\top}$. It was shown in \cite{MR2731287} that by defining $R=\sqrt{P}$ and introducing the change of variables $z(t) = Rx(t)$, the system \eqref{CDS} can be reformulated as the following perturbed sweeping process:
\begin{equation*}
\dot{z}(t) \in -N(S(t);z(t)) + RAR^{-1}z(t) + REu(t) ,
\end{equation*}
where  $S(t) := R(K(t)) = \{Rx : \ x \in K(t)\}$ and $K(t)$ is the closed convex polyhedral set $$K(t):= \{x \in \R^{n} \ | \ Cx + Gu(t) + F \geq 0\} .$$
Fix $x\in \H$ and $\varepsilon, \eta>0$. To apply the inexact catching-up algorithm, we must devise a numerical method to find $\varepsilon-\eta$ approximate projections. Since obtaining the projection involves a quadratically constrained problem, we will use the primal-dual approach (see, e.g., \cite{MR4621666}) to the (primal) optimization problem:
\begin{equation}\label{P_t_canon}
\begin{aligned}
d_{S(t)}^2(x)=\inf_{y\in K(t)}\Vert x-Ry\Vert^2 = \inf_{Ay\leq b}  y^{\top} P y + 2 f^{\top} y
\end{aligned}
\end{equation}
where $Q:= P$, $f:= -Rx$, $A:= -C$ and $b:= Gu(t) + F$. The dual formulation of \eqref{P_t_canon} is 
\begin{equation}\label{D_t_canon}
\begin{aligned}
\max_{\lambda \in \R^{m}_+} & -\lambda^{\top} A Q^{-1} A^{\top} \lambda -2(AQ^{-1}f + b)^{\top} \lambda -f^{\top} Q^{-1}f.
\end{aligned}
\end{equation}
Moreover, the primal and dual problems are linked through the relation:
\begin{equation}\label{KKT}
y^{*} = -Q^{-1}(f + A^{\top} \lambda^{*}) ,
\end{equation}
where $y^{*}$ and $\lambda^{*}$ are the primal and dual solutions, respectively. Hence, we can solve the dual problem using the projected gradient descent method:
\begin{equation}\label{projected-gradient}
\lambda_{k+1} = \left[ \lambda_k - \frac{1}{\lambda_{\textrm{max}}(CB)}\left(CB \lambda_k + B^{\top}Rx + Gu(t) + F\right)\right]_{+} ,
\end{equation}
where $[\cdot]_{+}$ denotes the projection onto the nonnegative orthant (see, e.g.,  \cite[Lemma~6.26]{MR3719240}). Finally, the primal solution can be recovered through relation \eqref{KKT}. 
\begin{remark}
Here, the contribution of our inexact method can be clearly observed. It is easy to see that the proposed algorithm for calculating the projection does not necessarily yield points that remain within the set, highlighting the importance of approaching the projection from any point. 
\end{remark}
The next result provides some properties of the proposed numerical method.
\begin{lemma}\label{lema-dual-bound}
Let $y^{*}$ and ${\lambda^{*}}$ be solutions of \eqref{P_t_canon} and \eqref{D_t_canon}, respectively. Let $(\lambda_k)$ be the sequence generated by \eqref{projected-gradient}. Define $y_k = B\lambda_k + R^{-1}x$ for all $k \in \N$. Then, the following assertions hold:
\begin{itemize}
\item[(i)] For all $k\in \mathbb{N}$, $\Vert y_k - y^{*} \Vert \leq \Vert B \Vert \Vert \lambda_k - \lambda^{*} \Vert$.
\item[(ii)] Let $\varepsilon, \eta>0$  and suppose that, for some $\bar{k} \in \N$, the following condition hold:
$$\Vert \lambda_{\bar{k}} - \lambda^{*} \Vert \leq \operatorname{max}\{ \frac{\varepsilon}{M}, \frac{\eta}{\Vert B \Vert \Vert R \Vert}\} \textrm{ with } M:= \sup_{k \in \N} \left(\Vert P \Vert \Vert B(\lambda_k + \lambda^{*})+ 2R^{-1}x\Vert + 2\Vert Rx\Vert\right) \Vert B \Vert .
$$
Then, $z:= R y_{\bar{k}}\in \proj_{S(t)}^{\varepsilon, \eta}(x)$.
\end{itemize}
\end{lemma}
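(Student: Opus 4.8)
The plan is to reduce both assertions to the single algebraic identity $y^* = R^{-1}x + B\lambda^*$, after which the control of $y_k$ by $\lambda_k$ becomes transparent. First I would extract this identity from the optimality relation \eqref{KKT}: since $Q = P$, $f = -Rx$ and $A = -C$, one has $y^* = P^{-1}Rx + P^{-1}C^{\top}\lambda^*$, and the two structural hypotheses $PB = C^{\top}$ and $R = \sqrt{P}$ give $P^{-1}C^{\top} = B$ and $P^{-1}R = R^{-1}$. Hence $y^* = R^{-1}x + B\lambda^*$, and subtracting the definition $y_k = B\lambda_k + R^{-1}x$ yields the clean relation $y_k - y^* = B(\lambda_k - \lambda^*)$. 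Assertion (i) is then immediate from $\|B(\lambda_k - \lambda^*)\| \le \|B\|\,\|\lambda_k - \lambda^*\|$.

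For assertion (ii) I would first record the quadratic reformulation of the objective. Writing $g(y) := y^{\top} P y + 2 f^{\top} y$ for the primal cost in \eqref{P_t_canon} and using $R^{\top}R = P$ together with $f = -Rx$, one gets $\|x - Ry\|^2 = \|x\|^2 + g(y)$ for every $y$. Since $y^*$ solves \eqref{P_t_canon}, this gives $d_{S(t)}^2(x) = \|x\|^2 + g(y^*)$, and therefore, for $z = Ry_{\bar k}$, the exact error identity $\|x - z\|^2 - d_{S(t)}^2(x) = g(y_{\bar k}) - g(y^*)$. The key point here is that $y_{\bar k}$ need not be feasible (it is recovered from a dual iterate), so one cannot argue through $\|x-z\|^2 \ge d_{S(t)}^2(x)$; working directly with the objective gap sidesteps this.

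Next I would estimate that gap. Expanding $g(y_{\bar k}) - g(y^*) = (y_{\bar k} - y^*)^{\top} P(y_{\bar k} + y^*) + 2 f^{\top}(y_{\bar k} - y^*)$, substituting $y_{\bar k} - y^* = B(\lambda_{\bar k} - \lambda^*)$ and $y_{\bar k} + y^* = B(\lambda_{\bar k} + \lambda^*) + 2R^{-1}x$, and applying Cauchy--Schwarz with operator norms, one obtains
\[
|g(y_{\bar k}) - g(y^*)| \le \|B\|\,\|\lambda_{\bar k} - \lambda^*\|\left(\|P\|\,\|B(\lambda_{\bar k}+\lambda^*) + 2R^{-1}x\| + 2\|Rx\|\right) \le M\,\|\lambda_{\bar k} - \lambda^*\|.
\]
Thus the bound $\|\lambda_{\bar k} - \lambda^*\| \le \varepsilon/M$ gives $\|x-z\|^2 < d_{S(t)}^2(x) + \varepsilon$. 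For the membership part, choosing $S_\eta := S(t) + \eta\B$ (closed in $\R^n$, as $\eta\B$ is compact), I would bound $d_{S(t)}(z) \le \|z - Ry^*\| = \|RB(\lambda_{\bar k}-\lambda^*)\| \le \|R\|\,\|B\|\,\|\lambda_{\bar k}-\lambda^*\| \le \eta$, so that $z \in S_\eta$. The two facts together give $z \in \proj_{S(t)}^{\varepsilon,\eta}(x)$.

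The main obstacle --- and really the only nonroutine step --- is pinning down the identity $y^* = R^{-1}x + B\lambda^*$, which hinges on correctly combining $PB = C^{\top}$ with $R=\sqrt{P}$; once this is in hand, both assertions are straightforward norm estimates built on the quadratic reformulation $\|x-Ry\|^2 = \|x\|^2 + g(y)$. A secondary subtlety worth flagging is that the thresholds $\varepsilon/M$ and $\eta/(\|B\|\|R\|)$ govern two distinct quantities --- the objective gap and the distance of $z$ to $S(t)$ --- so $\|\lambda_{\bar k}-\lambda^*\|$ must sit below both for the conclusion to hold.
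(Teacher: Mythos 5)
Your proof is correct and follows essentially the same route as the paper's: the identity $y_k-y^*=B(\lambda_k-\lambda^*)$ from \eqref{KKT}, the expansion of $\Vert x-Ry_{\bar k}\Vert^2-\Vert x-Ry^*\Vert^2$ as a quadratic gap bounded by $M\Vert\lambda_{\bar k}-\lambda^*\Vert$, and the distance estimate $d_{S(t)}(z)\le\Vert R\Vert\,\Vert B\Vert\,\Vert\lambda_{\bar k}-\lambda^*\Vert\le\eta$ for membership in $S_\eta$. Your closing remark that both thresholds must hold simultaneously (so the $\max$ in the hypothesis should really be a $\min$) is a valid observation about the statement that the paper's proof silently glosses over.
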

\begin{proof} Assertion $(i)$ follows directly from relation \eqref{KKT}. To prove $(ii)$, we observe that
\begin{equation}\label{bound-eps}
\begin{aligned}
    \Vert x - Ry_k \Vert^{2} - \Vert x - Ry^{*} \Vert^{2} & = y_k^{\top} P y_k - y^{*\top} P y^{*} - 2(Rx)^{\top}(y_k - y^{*}) \\
    &=  (y_k + y^{*})^{\top}P(y_k-y^{*}) + 2(-Rx)^{\top}(y_k - y^{*})\\
    &\leq  \left(\Vert P \Vert \Vert y_k + y^{*}\Vert + 2\Vert Rx\Vert\right)\Vert y_k - y^{*}\Vert\\
    &\leq  \left(\Vert P \Vert \Vert B(\lambda_k + \lambda^{*})+ 2R^{-1}x\Vert + 2\Vert Rx\Vert\right) \Vert B \Vert \Vert \lambda_k - \lambda^{*}\Vert ,
\end{aligned}
\end{equation}
where we have used (i).  Since the dual problem is a strictly convex quadratic program, $(\lambda_k)$ converges to the unique solution $\lambda^{*}$, and therefore $M<+\infty$. Hence, by using that $\Vert \lambda_k - \lambda^{*} \Vert \leq \varepsilon/M$, we obtain that $\Vert x - Ry_k \Vert^{2} \leq d_{S(t)}^2(x) + \varepsilon$.  
Moreover, since $\Vert \lambda_k - \lambda^{*} \Vert \leq  \frac{\eta}{\Vert B \Vert \Vert R \Vert}$, it follows from (i) that 
$$\Vert \bar{y} - y^{*} \Vert \leq \frac{\eta}{\Vert R \Vert},$$
which implies that $z \in \proj_{S(t)}^{\varepsilon, \eta}(x)$.
\end{proof}
It is worth mentioning that the number of iterations required to achieve a certain precision (and ensure an $\varepsilon-\eta$ approximate projection) can be estimated using classical results from the projected gradient method (see, e.g., \cite[Theorem 2.2.8]{MR2142598}). We end this section by applying our numerical method to a problem involving electrical circuits with ideal diodes. The example was considered previously in \cite[Example 2.52]{Acary-Brogliato-2008}.
\begin{example}
Let us consider the electrical circuit with ideal diodes shown in Figure \ref{circuito-1}.
\begin{figure}[h]
    \centering
            \includegraphics[width=0.45\textwidth]{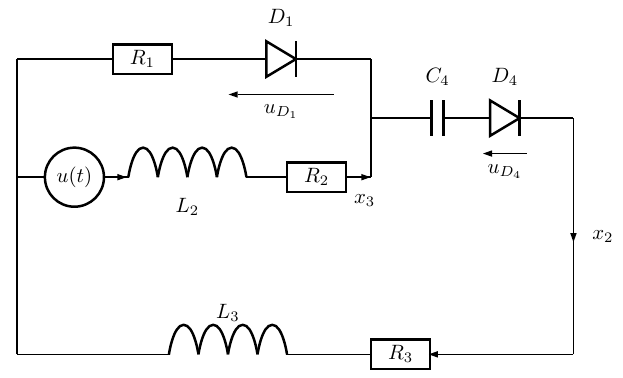} 
        \caption{Electrical circuit with ideal diodes.}
        \label{circuito-1}
\end{figure}
\\
Here  $R_1, R_2, R_3 \geq 0$, $L_2, L_3 > 0$, $C_4 > 0$. The presence of the diodes generates the complementarity relationships  $0 \leq -u_{D_4} \perp x_2 \geq 0$ and $0 \leq -u_{D_1} \perp -x_3 + x_2 \geq 0$, where $u_{D_4}$ and $u_{D_1}$ are the voltages of the diodes. The dynamics of the circuit are given by the following system:
\begin{equation}\label{Electrical-Circuit}
\begin{cases}
    \dot{x_1}(t) = x_2(t) \\
    \dot{x_2}(t) = - \left( \frac{R_1 + R_3}{L_3}\right)x_2(t) + \frac{R_1}{L_3} x_3(t) - \frac{1}{L_3 C_4} x_1(t) + \frac{1}{L_3}\zeta_1(t) +  \frac{1}{L_3}\zeta_2(t) + \frac{u(t)}{L_3}, \\
    \dot{x_3}(t) = - \left( \frac{R_1 + R_2}{L_2}\right)x_3(t) + \frac{R_1}{L_2}x_2(t) - \frac{1}{L_2}\zeta_1(t) + \frac{u(t)}{L_2}, \\
    0 \leq \begin{pmatrix}
        \zeta_1(t) \\ 
        \zeta_2(t)
    \end{pmatrix} \perp \begin{pmatrix}
        -x_3(t) + x_2(t) \\
        x_2(t)
    \end{pmatrix} \geq 0,
\end{cases}
\end{equation}
where $x_1(\cdot)$ is the time integral of the current across the capacitor, $x_2(\cdot)$ is the current across the capacitor, and $x_3(\cdot)$ is the current across the inductor $L_2$ and resistor $R_2$, $- \zeta_1$ is the voltage of the diode $D_1$ and $-\zeta_2$ is the voltage of the diode $D_4$. The system in \eqref{Electrical-Circuit} can be written compactly as 
\begin{equation*}
    \begin{cases}
        \dot{x}(t) = Ax(t) + B\zeta(t) + Eu(t)\\
        0 \leq \zeta(t) \perp y(t) = Cx(t) \geq 0 ,
    \end{cases}
\end{equation*}
with 
\begin{equation*}
    A = \begin{pmatrix}
        0 & 1 & 0 \\
        -\frac{1}{L_3C_4} & - \frac{R_1 + R_3}{L_3} & \frac{R_1}{L_3}\\
        0 & \frac{R_1}{L_2} & - \frac{R_1 + R_2}{L_2}
    \end{pmatrix}, B = \begin{pmatrix}
    0 & 0 \\
    \frac{1}{L_3} & \frac{1}{L_3} \\
    -\frac{1}{L_2} & 0
\end{pmatrix}, C=
\begin{pmatrix}
     0 & 1 & -1 \\
     0 & 1 & 0
\end{pmatrix}, E = 
\begin{pmatrix}
    0 \\
    \frac{1}{L_3} \\
    \frac{1}{L_2}
\end{pmatrix},
\end{equation*}
which is a particular case of $\eqref{CDS}$ with $D = 0, F = 0$ and $G = 0$. Moreover, $PB = C^T$ holds with 
$$P = \begin{pmatrix}
        1 & 0 & 0 \\
        0 &  L_3 & 0\\
        0 & 0 & L_2
    \end{pmatrix} \implies R = \begin{pmatrix}
        1 & 0 & 0 \\
        0 &  \sqrt{L_3} & 0\\
        0 & 0 & \sqrt{L_2} 
    \end{pmatrix}.
    $$
To apply the inexact catching-up algorithm, we consider $n = 100$, a uniform partition  $\left(t_k^n\right)_{k = 0}^{n}$ of $[0, 1]$ with  $\mu_n = \frac{1}{n}$, $\varepsilon_n = \frac{1}{n^{2.1}}, \eta_n = \frac{1}{n^{1.05}}$. As discussed earlier, the variable $z = Rx$ satisfies: 
$$
\dot{z}(t) \in -N(S;z(t)) + f(t, z(t)),
$$
where $f(t,x) = RAR^{-1}x + REu(t)$, $S = \{Rx : x \in K\}$  and $K := \{x \in \R^{3}: -Cx \leq 0\}$. We apply the inexact catching-up algorithm by computing for each $k \in \{0, \ldots, n-1\}$
\begin{equation}\label{Catching-up}
z_{k+1}^n \in \proj_{S}^{\varepsilon_n, \eta_n}(z_k^n+RAR^{-1}z_k^n \mu_n + RE \int_{t_k^n}^{t_{k+1}^n}  u(s) \mathrm{d}s).
\end{equation}
Hence, we consider for each $k \in \{0,1, \ldots, n-1\}$ the associated dual problem
\begin{equation*}
\begin{aligned}
\min_{\lambda \in \R^{m}_+}  \lambda^{\top} CB \lambda + 2\left(B^{\top} Rw_{k}^{n}\right)^{\top}\lambda,
\end{aligned}
\end{equation*}
where $w_k^n:= z_k^n+RAR^{-1}z_k^n \mu_n + RE \int_{t_k^n}^{t_{k+1}^n}  u(s) \mathrm{d}s$ (the integral is evaluated through a classical integration technique). We apply the projected gradient descent (see algorithm \eqref{projected-gradient})
\begin{equation*}
\lambda_{j+1} = \left[ \lambda_j - \frac{1}{\lambda_{\max}(CB)}\left(CB \lambda_j + B^{\top}Rw_{k}^{n}\right)\right]_{+} ,
\end{equation*}
to get an approximate dual solution. Then, from \eqref{KKT}, we obtain an approximate primal solution $y_k$. Finally, $z_{k+1}:=Ry_k$ satisfies \eqref{Catching-up}. 

Figure \ref{Figurex1-x3} shows the numerical result for $R_1 = 1, R_2 = 2, R_3 = 1$, $L_2 = 1, L_3 = 2, C_4 = 1$, $u(t) = 16\sin(6\pi t) - 0.5$ and initial condition $x_0 = (0,0,0)$.

\begin{figure}[htbp]
\begin{center}$
\begin{array}{cc}
\includegraphics[scale=0.7]{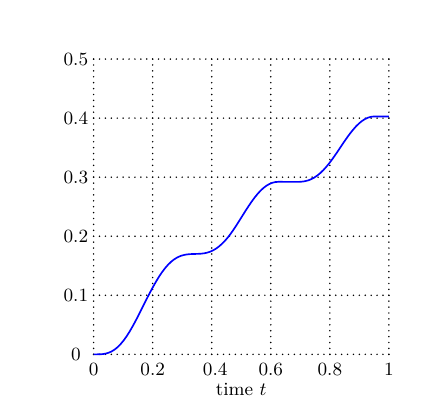}&
\includegraphics[scale=0.7]{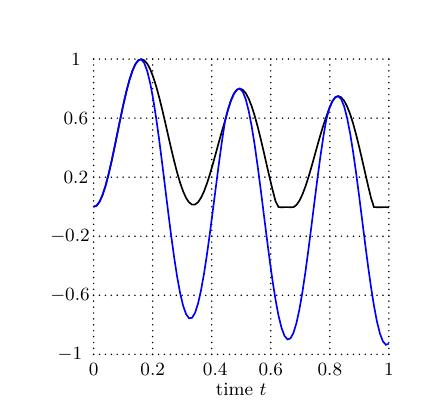}
\end{array}$
\end{center}
\caption{On the left solution $x_1$ and on the right solutions $x_2$ (black) and $x_3$ (blue) for $R_1 = 1, R_2 = 2, R_3 = 1$, $L_2 = 1, L_3 = 2, C_4 = 1$, $u(t) = 16\sin(6\pi t) - 0.5$ and $x_0 = (0,0,0)$.}
\label{Figurex1-x3}
\end{figure}
\end{example}
It is worth noting that the above example is a particular case of \eqref{CDS} with $G = 0$. The case where $G \neq 0$ is especially interesting, as it causes the set $K(t):=\{x \in\R^{n} : Cx+Gu(t)+F \geq 0\}$ to vary over time. This falls within the scope of our results as long as $u(t)$ is Lipschitz. However, if $u(t)$ is discontinuous, then $K(t)$ will also be discontinuous, and solutions will be  discontinuous as well. Although the discontinuous case is not addressed by the developments of this work, the inexact catching algorithm seems to be effective, as is shown in Figure \ref{Discontinuous-case}.
\begin{figure}[htbp]
\begin{center}$
\begin{array}{cc}
\includegraphics[scale=0.7]{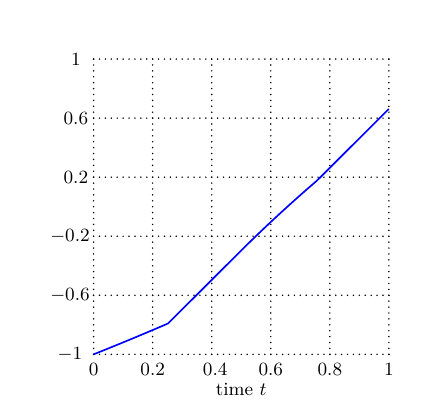}&
\includegraphics[scale=0.7]{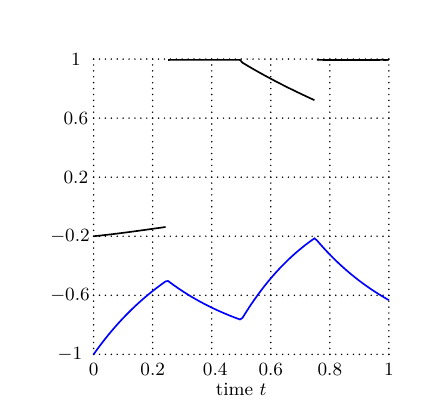}
\end{array}$
\end{center}
\caption{On the left solution $x_1$ and on the right solutions $x_2$ (black) and $x_3$ (blue) for $R_1 = 1, R_2 = 2, R_3 = 1$, $L_2 = 1, L_3 = 2, C_4 = 1$,  $G=(0,1)^t$, $u(t) = \operatorname{sign}(\sin(4 \pi t))$ and $x_0 = (0,0,0)$.}
\label{Discontinuous-case}
\end{figure}
\newpage
\section{Concluding Remarks}

In this paper, we present an inexact version of the catching-up algorithm for sweeping processes.  Building on the work in \cite{MR2731287}, we define a new notion of approximate projection, called $\varepsilon-\eta$ approximate projection, which is compatible with any numerical method for approximating exact projections, as this new notion is not restricted to remain strictly within the set. We provide several properties of $\varepsilon-\eta$ approximate projections, which enable us to prove the convergence of the inexact catching-up algorithm in three general frameworks: prox-regular moving sets, subsmooth moving sets, and merely closed sets.  

Additionally, we apply our numerical results to address complementarity dynamical systems, particularly electrical circuits with ideal diodes. In this context, we implement the inexact catching-up algorithm using a primal-dual  method, which typically does not guarantee a feasible point. 

 Future research could focus on extending the results of this paper to encompass additional applications, such as crowd motion, as well as cases involving discontinuous moving sets.

\end{document}